\documentclass[british,english]{amsart}

\usepackage{mathptmx}

\usepackage[LGR,T1]{fontenc}
\usepackage{textcomp}
\usepackage[latin9]{luainputenc}
\usepackage{babel}

\usepackage{amstext}
\usepackage{amsthm}
\usepackage{amssymb}

\usepackage{tikz-cd}
\usepackage{tikz}

\usepackage{mathtools}

\usepackage{xcolor}

\usepackage[pdfusetitle,
 bookmarks=true,bookmarksnumbered=false,bookmarksopen=false,
 breaklinks=false,pdfborder={0 0 1},backref=false,colorlinks=false]
 {hyperref}

\makeatletter

\numberwithin{equation}{section}
\numberwithin{figure}{section}
\theoremstyle{plain}
\newtheorem{thm}{\protect\theoremname}[section]

\theoremstyle{definition}
\newtheorem{defn}[thm]{\protect\definitionname}
\newtheorem{example}[thm]{\protect\examplename}

\theoremstyle{plain}
\newtheorem{prop}[thm]{\protect\propositionname}
\newtheorem{lem}[thm]{\protect\lemmaname}
\newtheorem{fact}[thm]{\protect\factname}

\theoremstyle{plain}
\newtheorem*{lem*}{\protect\lemmaname}

\@ifundefined{date}{}{{\date{}}}

\makeatother

\addto\captionsbritish{\renewcommand{\definitionname}{Definition}}
\addto\captionsbritish{\renewcommand{\examplename}{Example}}
\addto\captionsbritish{\renewcommand{\factname}{Fact}}
\addto\captionsbritish{\renewcommand{\lemmaname}{Lemma}}
\addto\captionsbritish{\renewcommand{\propositionname}{Proposition}}
\addto\captionsbritish{\renewcommand{\theoremname}{Theorem}}
\addto\captionsbritish{\renewcommand{\corollaryname}{Corollary}}
\addto\captionsbritish{\renewcommand{\claimname}{Claim}}

\addto\captionsenglish{\renewcommand{\definitionname}{Definition}}
\addto\captionsenglish{\renewcommand{\examplename}{Example}}
\addto\captionsenglish{\renewcommand{\factname}{Fact}}
\addto\captionsenglish{\renewcommand{\lemmaname}{Lemma}}
\addto\captionsenglish{\renewcommand{\propositionname}{Proposition}}
\addto\captionsenglish{\renewcommand{\theoremname}{Theorem}}
\addto\captionsenglish{\renewcommand{\corollaryname}{Corollary}}
\addto\captionsenglish{\renewcommand{\claimname}{Claim}}

\providecommand{\corollaryname}{Corollary}
\providecommand{\claimname}{Claim}
\providecommand{\definitionname}{Definition}
\providecommand{\examplename}{Example}
\providecommand{\factname}{Fact}
\providecommand{\lemmaname}{Lemma}
\providecommand{\propositionname}{Proposition}
\providecommand{\theoremname}{Theorem}

\begin{document}
\selectlanguage{british}%


\global\long\def\res{\!\restriction}%
\global\long\def\actson{\curvearrowright}%

\global\long\def\Ad{\text{Ad}}%
\global\long\def\Rad{\text{Rad}}%
\global\long\def\FC#1{\text{FC}\left(#1\right)}%

\global\long\def\Prob#1{\text{Prob}\left(#1\right)}%
\global\long\def\Ch#1{\text{Ch}\left(#1\right)}%
\global\long\def\Tr#1{\text{Tr}\left(#1\right)}%
\global\long\def\Trleq#1{\text{Tr}_{\leq1}\left(#1\right)}%

\global\long\def\normalizer#1#2{\text{N}_{#1}\left(#2\right)}%
\global\long\def\Sub#1{\text{Sub}\left(#1\right)}%
\global\long\def\conn#1{#1^{0}}%
\global\long\def\explain#1#2{\underset{\underset{\mathclap{#2}}{\downarrow}}{#1}}%

\global\long\def\Ind#1#2{\text{Ind}_{#2}^{#1}}%
\global\long\def\Res#1#2{\text{Res}_{#2}^{#1}}%

\global\long\def\ev{\mathrm{ev}}%
\global\long\def\N{\mathbb{N}}%
\global\long\def\Z{\mathbb{Z}}%
\global\long\def\Q{\mathbb{Q}}%
\global\long\def\R{\mathbb{R}}%
\global\long\def\C{\mathbb{C}}%
\global\long\def\H{\mathbb{H}}%
\global\long\def\T{\mathbb{T}}%
\global\long\def\P{\mathbb{P}}%
\global\long\def\K{\mathbb{K}}%
\global\long\def\F{\mathbb{F}}%

\global\long\def\bG{\mathbf{G}}%
\global\long\def\bP{\mathbf{P}}%
\global\long\def\bH{\mathbf{H}}%
\global\long\def\bR{\mathbf{R}}%
\global\long\def\bQ{\mathbf{Q}}%
\global\long\def\bV{\mathbf{V}}%
\global\long\def\cP{\mathcal{P}}%
\global\long\def\bN{\mathbf{N}}%
\global\long\def\bM{\mathbf{M}}%
\global\long\def\bY{\mathbf{Y}}%
\global\long\def\bX{\mathbf{X}}%
\global\long\def\bB{\mathbf{B}}%
\global\long\def\bA{\mathbf{A}}%
\global\long\def\bT{\mathbf{T}}%
\global\long\def\bF{\mathbf{F}}%
\global\long\def\bL{\mathbf{L}}

\global\long\def\GL{\mathrm{GL}}%
\global\long\def\BH{\mathcal{B}(\mathcal{H})}%
\global\long\def\sl#1{\mathrm{SL}(#1)}%
\global\long\def\slz{SL_{n}\left(\mathbb{Z}\right)}%
\global\long\def\slr{SL_{n}\left(\mathbb{R}\right)}%
\global\long\def\Cstar{\mathrm{C}^{*}}%
\global\long\def\Cr{\mathrm{C}^{*}_{r}}%

\title{Selfless reduced $C^{*}$-algebras of linear groups}
\thanks{This work was supported by NSF postdoctoral fellowship grant DMS-2402368.}
\author{Itamar Vigdorovich}
\begin{abstract}
It is shown that the reduced $C^{*}$-algebra of a nontrivial linear
group $\Gamma\leq\mathrm{GL}_{d}(k)$ with trivial amenable radical
is selfless. Thus selflessness and simplicity coincide for reduced $C^*$-algebras of linear groups.  Similar results are obtained for twisted reduced group $C^{*}$-algebras.
\end{abstract}

\maketitle
\section{Introduction}
Let $\Gamma$ be a discrete group and $\lambda:\Gamma\to \mathrm U(\ell^2(\Gamma))$ its left regular representation. Then $\lambda$ extends linearly to a representation of the group algebra
$\lambda:\C[\Gamma]\to \mathcal B(\ell^2(\Gamma))$. The reduced $C^*$-algebra of $\Gamma$ is defined as the operator-norm closure
\[
C_r^*(\Gamma)=\overline{\lambda(\C[\Gamma])}\subset \mathcal B(\ell^2(\Gamma)).
\]
Reduced group $C^*$-algebras have long been a central source of important examples in the general study of $C^*$-algebras. In the other direction, understanding
$C^*$-algebraic properties of $C_r^*(\Gamma)$ is central to harmonic analysis on $\Gamma$.
For example, simplicity of $C_r^*(\Gamma)$ (that is, the lack of non-trivial two-sided ideals) is equivalent to $\lambda$ being weakly equivalent to every tempered representation.

The main focus of this paper is the notion of \emph{selfless $C^*$-algebras}, which is a strengthening of simplicity.
This notion was introduced by Robert in \cite{robert2025selfless}.
Rather than recalling the definition in full generality, which involves additional
terminology, we restrict attention to reduced group $C^*$-algebras.

We say that $C_r^*(\Gamma)$ is selfless if there exists a free ultrafilter $\mathcal U$ and a
$*$-homomorphism\footnote{$f$ is automatically injective and trace-preserving provided $\Gamma\ne \{e\}$.}
$
f \colon C_r^*(\Gamma * \Z) \to C_r^*(\Gamma)^{\mathcal U}
$
such that the following diagram commutes:
\[
\begin{tikzcd}\label{diagam}
	C_r^*(\Gamma) && C_r^*(\Gamma)^{\mathcal U} \\
	& C_r^*(\Gamma * \Z)
	\arrow["\Delta", hook, from=1-1, to=1-3]
	\arrow["\iota", hook, from=1-1, to=2-2]
	\arrow["f", hook, from=2-2, to=1-3]
\end{tikzcd}
\]
Here $C^*_r(\Gamma)^\mathcal U$ is the ultrapower $C^*$-algebra consisting of all bounded sequences quotiented by $\mathcal U$-null sequences, $\Delta$ denotes the diagonal embedding, and $\iota$ is the map induced by the canonical inclusion of
$\Gamma$ as a free factor of $\Gamma * \Z$.

Selflessness implies several of the most significant regularity properties in the theory
of $C^*$-algebras.
Among these, \emph{strict comparison} plays a central role: in the amenable (nuclear) setting it
is predicted to be equivalent to other key regularity properties via the Toms--Winter conjecture, while
in the non-amenable setting it is widely regarded as the most appropriate regularity property in the context of classification.

Another key consequence of selflessness is \emph{stable rank one}.
Together with strict comparison, these properties yield substantial simplifications of invariants such as the Cuntz
semigroup and $K$-theory, cf.
\cite{rieffelmain,2024strictcomparisonreducedgroup,schafhauser2025nuclear,gardella2022moderntheorycuntzsemigroups}.
A major class of examples to which the results of this paper apply consists of
fundamental groups of closed locally symmetric Riemannian manifolds.
In this geometric context, the resulting structural consequences are particularly appealing in
view of their connections to complex vector bundles, especially in the presence of the Baum--Connes conjecture for linear
groups, which is currently known only in certain special cases
\cite{valette2002introduction,ramagge1998haagerup,lafforgue}.

What makes selflessness powerful is not only its implications, but also its relatively simple categorical nature. Indeed, the von Neumann algebraic analogue of Diagram~(\ref{diagam}) holds for all $\mathrm{II}_1$-factors by Popa's theorem \cite{popa1995free}, and this was in fact a main motivation for Robert's definition of selflessness.
Furthermore, the key insight in \cite{2024strictcomparisonreducedgroup} is that the group-theoretic analogue of Diagram~(\ref{diagam}) recovers the well-studied notion of
mixed-identity-free groups.
It is this perspective that enabled the authors of \cite{2024strictcomparisonreducedgroup} to resolve the
long-standing open problem of strict comparison for $C_r^*(\F_n)$, as a consequence of selflessness.
Subsequently, selflessness has become an influential concept in the study of
$C^*$-algebras, leading to a rapidly growing body of work
\cite{elayavalli2025some,vigdorovich2025structural,raum2025strict,hayes2025selfless1,
hayes2025selfless2,houdayer2025selfless,flores2025selfless,ozawa2025proximality,avni2025mixed}.

Importantly, any selfless $C^*$-algebra is simple \cite[Theorem~3.1]{robert2025selfless}.
Whether the converse holds for reduced group $C^*$-algebras is a timely open problem
\cite[Problem~XCI]{schafhauser2025nuclear}.
The main purpose of this paper is to show that this converse does hold for the rich and important
class of linear groups.

\begin{defn}
A group $\Gamma$ is called \textit{linear} if it admits a faithful representation
$\Gamma\hookrightarrow\mathrm{GL}_{d}(k)$ for some field $k$ and some $d\in\N$.
\end{defn}

\begin{thm}\label{thm:main}
For a linear group $\Gamma\ne\{e\}$, the following are equivalent:
\begin{enumerate}
\item $C_{r}^{*}(\Gamma)$ is selfless.
\item $C_{r}^{*}(\Gamma)$ is simple.
\item $C_{r}^{*}(\Gamma)$ has a unique tracial state.
\item $\Gamma$ has no nontrivial amenable normal subgroups.
\end{enumerate}
\end{thm}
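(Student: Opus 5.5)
The implications (1)$\Rightarrow$(2)$\Rightarrow$(4) and (1)$\Rightarrow$(3)$\Rightarrow$(4) are general facts that hold for all discrete groups. Selflessness implies simplicity by \cite[Theorem~3.1]{robert2025selfless}. Selflessness also implies uniqueness of the trace, since a selfless $C^*$-algebra has at most one tracial state; this also follows from Robert's work. If $N\trianglelefteq\Gamma$ is a nontrivial amenable normal subgroup, then the quasi-regular representation on $\ell^2(\Gamma/N)$ is weakly contained in $\lambda$ by amenability of $N$. This yields a proper nonzero ideal in $C_r^*(\Gamma)$, namely the kernel of the extension to $C_r^*(\Gamma)$, which is nonzero as $N\neq\{e\}$. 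The same hypothesis also gives a second tracial state $\tau(\lambda_g)=1_{N}(g)$, which is positive-definite and continuous on $C_r^*(\Gamma)$ because $N$ is amenable. Hence (2)$\Rightarrow$(4) and (3)$\Rightarrow$(4). The whole content is therefore (4)$\Rightarrow$(1) for linear $\Gamma$.

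For (4)$\Rightarrow$(1), I will use the criterion from \cite{2024strictcomparisonreducedgroup}, together with its refinements in the subsequent literature such as \cite{ozawa2025proximality,avni2025mixed}. The criterion says it suffices to find a sequence $g_n\in\Gamma$ that is asymptotically free from $\Gamma$ in the reduced sense. Concretely, for every nontrivial reduced word $w(x)=\gamma_0x^{\epsilon_1}\gamma_1\cdots x^{\epsilon_k}\gamma_k$ with coefficients in $\Gamma$, I need $w(g_n)\neq e$ for $\mathcal U$-almost all $n$. Beyond this mixed-identity-freeness, I also need enough norm control, a Haagerup/rapid-decay style estimate, so that the map $\lambda_x\mapsto(\lambda_{g_n})_n$ extends to $C_r^*(\Gamma*\Z)$. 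The plan is to produce such $g_n$ by a ping-pong construction with proximal elements.

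First, reduce to a good linear setting. Take the Zariski closure $\bG$ of $\Gamma$ in $\GL_d$. Condition (4) forces the solvable radical of $\bG^0$ to meet $\Gamma$ in a group that is virtually solvable, hence amenable, and normal in a finite-index subgroup. Passing through the finite-index subgroup and the amenable radical carefully, I will arrange that $\Gamma$ acts on a product of semisimple groups $\prod_i \bG_i(k_i)$ over local fields with Zariski-dense, unbounded image in each factor. Second, by Tits' construction, choose for each factor irreducible representations where $\Gamma$ contains very proximal elements. Third, for a finite set of coefficients, take $g_n=h^{n}$, or a product of two such powers, where $h$ is a sufficiently generic proximal element. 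Ping-pong on projective spaces then shows every reduced word with those coefficients is nontrivial for large $n$. Finally, upgrade this to operator-norm freeness via the proximality approach of \cite{ozawa2025proximality}: a free-Poisson-boundary or strong-boundary argument shows that $\lambda$ on the pair $(\Gamma,g_n)$ converges in distribution and in norm to the free product.

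The main obstacle is handling the full generality of linear groups: arbitrary fields, including positive characteristic, and non-finitely-generated $\Gamma$. The difficulty is that elements of $\Gamma$ may act trivially on some factors, or that a normal subgroup may be "invisible" to a single representation. A coefficient $\gamma_i$ in a word could be in the kernel of every chosen representation's proximality. Since (4) only removes amenable normal subgroups, I must ensure that every nontrivial $\gamma$ acts nontrivially in some factor in which the ping-pong takes place. To do this, I will take a product over all factors and choose $g_n$ simultaneously proximal in all of them. The non-finitely-generated case will then be handled by exhausting $\Gamma$ by finitely generated subgroups and applying a diagonal argument with respect to $\mathcal U$.
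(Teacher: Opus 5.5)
Your arguments for (1)$\Rightarrow$(2)$\Rightarrow$(4) and (1)$\Rightarrow$(3)$\Rightarrow$(4) are fine. The argument for (4)$\Rightarrow$(1), however, has a genuine gap exactly where the difficulty lies, namely \emph{norm control}. A sequence $g_n$ witnessing mixed-identity-freeness only gives convergence in distribution, $\tau(a(g_n))\to\tau(a)$ for $a\in\C[\Gamma*\Z]$. Since $\tau$ is faithful, this yields the lower bound on norms. To obtain a $*$-homomorphism defined on $C_r^*(\Gamma*\Z)$ you also need the upper bound $\lim_{\mathcal U}\|a(g_n)\|_{C_r^*(\Gamma)}\le\|a\|_{C_r^*(\Gamma*\Z)}$, and this does not follow from moment convergence. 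In \cite{2024strictcomparisonreducedgroup} the upper bound comes from property RD. But RD fails for many groups covered by the theorem, e.g.\ $\mathrm{SL}_3(\Z)$, which has trivial amenable radical but contains amenable subgroups of exponential growth. Your substitute is the claim that a ``free-Poisson-boundary or strong-boundary argument'' from \cite{ozawa2025proximality} shows that $(\Gamma,g_n)$ converges in norm to the free product. That is not what \cite{ozawa2025proximality} provides, and your sketch gives no argument for it. Ping-pong with one or two powers of a generic proximal element certifies that words are nontrivial; it says nothing about operator norms.

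What Ozawa actually provides, and what the paper uses, is the criterion $\mathrm{P}_{\mathrm{PHP}}$ (Definition~\ref{def:php}). For each finite $F$ and $\epsilon>0$ one needs $n$ elements $\gamma_1,\dots,\gamma_n$ and sets $C_i\subset D_i$ satisfying the $F$-disjointness condition and, crucially, such that any $\ge\epsilon n^{1/2}$ of the sets $D_i$, $\gamma_i^{-1}(X\setminus C_i)$ have empty intersection. This is a Powers-type averaging condition over many elements, not asymptotic freeness of a single sequence. For proximal dynamics on a flag space, these sets must be neighbourhoods of the non-transversality loci $Y_{x_i^\pm}$, which are positive-dimensional, so bounding their overlap multiplicity is the real content. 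The paper does this as follows. It conjugates one proximal $\gamma_0$ by generic $s_i\in\Gamma$ (via $S$-Zariski density and Proposition~\ref{prop:good-sequence}) so that the points $x_i^\pm=s_ix^\pm$ are in general position. It then proves the uniform Noetherian bound of Proposition~\ref{prop:large-intersections-general}: $\bigcap_{i\in I}Y_{x_i^\pm}=\emptyset$ whenever $|I|>K(G)$. Finally it takes $n$ with $\epsilon n^{1/2}>2K$. None of this appears in your plan.

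Two of your reduction steps also need repair. First, you may not pass to a finite-index subgroup, since permanence of selflessness under finite extensions is not known. The paper instead keeps $\bG$ non-connected, uses $\bQ_v=\mathrm{N}_{\bG_v}(\bQ_{\Theta_{\Gamma,v}})$, and proves that all of $\Gamma$ acts faithfully on $\mathcal B$. Second, in the non-finitely-generated case, finitely generated subgroups of $\Gamma$ need not have trivial amenable radical. You must first exhibit a cofinal family that does (Lemma~\ref{lem:amenable-radical}) before a diagonal or inductive-limit argument (\cite{robert2025selfless}) applies.
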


The implication $(1)\Rightarrow(2)$ holds in general \cite[Theorem~3.1]{robert2025selfless}, and so do the implications
$(2)\Rightarrow(3)\Rightarrow(4)$, which are standard by this point (see e.g.\ \cite{breuillard2017c}). Moreover, for linear groups the implication
$(4)\Rightarrow(2)$ was proved in the celebrated work \cite{breuillard2017c}.

Our contribution is the implication $(4)\Rightarrow(1)$.
 This is made possible by the recent remarkable work of Ozawa \cite{ozawa2025proximality}, where (among other results) a dynamical criterion for selflessness of
reduced group $C^*$-algebras, called property $\mathrm{P}_{\mathrm{PHP}}$, is established (see Definition \ref{def:php}).
Ozawa verified $\mathrm{P}_{\mathrm{PHP}}$ for Zariski-dense subgroups of $\mathrm{PSL}_{n}(\R)$, and here we  show that $\mathrm{P}_{\mathrm{PHP}}$
holds for Zariski-dense subgroups of general semisimple $S$-algebraic groups. This includes certain
groups that are not strictly linear, but are ``almost linear'' in the $S$-adic sense.

\begin{thm}\label{thm:s-adic}
    Let $S$ be a finite set such that to each $v\in S$ is associated a local field $\K_{v}$ and a connected  adjoint  $\K_{v}$-simple algebraic group $\bG_{v}$. Let $G=\prod_{v\in S}\bG_{v}(\K_{v})$ and let
$\Gamma<G$ be a subgroup whose projection to each $\bG_v(\K_v)$ is Zariski dense and unbounded. Then $C^*_r(\Gamma)$ is selfless.
\end{thm}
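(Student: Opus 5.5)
The plan is to verify Ozawa's dynamical criterion $\mathrm{P}_{\mathrm{PHP}}$ (Definition~\ref{def:php}) for $\Gamma$, and then invoke Ozawa's theorem \cite{ozawa2025proximality} that $\mathrm{P}_{\mathrm{PHP}}$ implies selflessness of $C^*_r(\Gamma)$. As I understand it, the criterion requires an action $\Gamma\actson X$ on a compact space with two properties. First, every nontrivial $g\in\Gamma$ acts nontrivially in a sufficiently generic way. Second, for any finite family of nontrivial elements one can find a \emph{proximal hyperbolic} element $h\in\Gamma$, with an attracting point $x_h^+$ and a repelling set, placed in general position relative to that family; concretely, $g x_h^{\pm}$ should avoid the attracting and repelling data of $h$.

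The space I will use is $X=\prod_{v\in S}\bG_v(\K_v)/\bP_v(\K_v)$, the product of the full flag varieties, where $\bP_v$ is a minimal $\K_v$-parabolic subgroup. Equivalently, for each $v$ I can fix an irreducible $\K_v$-representation $\rho_v$ of $\bG_v$ that is proximal on its highest-weight line, and work with the corresponding projective spaces $\P(V_v)$; this is the setting in which Ozawa treated $\mathrm{PSL}_n(\R)$.

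The key steps, in order, are as follows.

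\begin{enumerate}
\item \textbf{Nontrivial elements are not too trivial.} Since each $\bG_v$ is adjoint and $\K_v$-simple, $\bG_v(\K_v)$ acts faithfully on the flag variety. Hence for $g\neq e$, at least one coordinate $g_v$ is nontrivial, and then the set of flags in $\bG_v/\bP_v$ fixed by $g_v$ (or sent to a non-generic position relative to a given flag) is a proper Zariski-closed subset.
\item \textbf{Existence of simultaneously proximal elements.} Because each projection of $\Gamma$ is Zariski dense and unbounded, results of Abels--Margulis--Soifer, Prasad and Benoist, in the form used by Breuillard--Gelander, provide $h\in\Gamma$ that is $\K_v$-regular and proximal on $\bG_v/\bP_v$ for every $v\in S$ at once. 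Moreover, the set of such elements is Zariski dense in $\prod_v\bG_v$, and their attracting and repelling flags can be prescribed to lie in any given nonempty Zariski-open subset, up to conjugating by an element of $\Gamma$.
\item \textbf{General position.} Given nontrivial $g_1,\dots,g_n$, I take $h$ from step~2 and conjugate it by some $\gamma\in\Gamma$. Zariski density of $\Gamma$ means $\gamma$ can be chosen outside the finitely many proper subvarieties from step~1 that encode the bad coincidences: $g_i x^{\pm}=x^{\pm}$, or $g_i x^{+}$ lying in the repelling hyperplane or Schubert set of $h$. Replacing $h$ by a high power then gives the contraction estimates required in Definition~\ref{def:php}.
\end{enumerate}

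I expect the main obstacle to be step~2 combined with step~3, uniformly across all places. The difficulty is ensuring that a single element of $\Gamma$ is proximal in every factor $v\in S$ simultaneously, with attracting and repelling points avoiding the finite set of subvarieties determined by the $g_i$. This is exactly where unboundedness of each projection is needed: a bounded projection contains no proximal elements.

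My first attempt would be to take a Schottky-type family from the Abels--Margulis--Soifer lemma, which gives a finite set $F\subset\Gamma$ such that for every $\gamma$ some $\gamma f$ is proximal with quantitative contraction at all places. I would then use Zariski density of $\Gamma$ in $\prod_v\bG_v$ (which follows from the factor-wise density because the $\bG_v$ are simple and pairwise non-isomorphic after base change, or else by a Goursat-type argument) to move the fixed points off the bad loci.

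A secondary point I need to check is whether Ozawa's criterion requires $X$ to be a single $\Gamma$-boundary. If it does, I would pass to the product, which is still a strongly proximal minimal space for $\Gamma$, and verify the condition coordinatewise.
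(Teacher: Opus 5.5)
There is a genuine gap, and it starts with how you read the criterion. $\mathrm{P}_{\mathrm{PHP}}$ (Definition~\ref{def:php}) does not say that, for a finite family, one can find a single proximal element in general position with respect to it. For given $F$ and $\epsilon>0$ it asks for $n$ elements $\gamma_1,\dots,\gamma_n$ and sets $C_i\subset D_i$ satisfying the disjointness condition (1). It also requires (2): \emph{any} $\ge\epsilon n^{1/2}$ of the sets $D_i$, $\gamma_i^{-1}(X\setminus C_i)$ must have empty intersection. A single set is already a subcollection, so one needs $n>\epsilon^{-2}$. As $\epsilon\to0$ you therefore need more and more ping-pong elements, and the overlap multiplicity of their repelling neighbourhoods must be bounded \emph{independently of $n$}. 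Your steps 1--3 produce one element $h$ and never address this, so even if carried out they would not give $\mathrm{P}_{\mathrm{PHP}}$.

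The paper supplies the missing idea as follows. Fix one proximal $\gamma_0$ with fixed points $(x^+,x^-)$. Using $S$-Zariski density, choose $s_1,\dots,s_n\in\Gamma$ generically, so that all points $as_ix^\pm$ ($a\in F\cup\{e\}$) are distinct and each family $\{s_ix^\pm\}_i$ is in general position (Proposition~\ref{prop:good-sequence}). Then prove a uniform Noetherian bound (Proposition~\ref{prop:large-intersections-general}): more than $K=K(G)$ of the non-transversality loci $Y_{s_ix^\pm}$ always have empty intersection, with $K$ independent of $n$. This holds because these loci are cut out by equations of uniformly bounded degree in a projective embedding. Finally take $\epsilon n^{1/2}>2K$ and $\gamma_i=s_i\gamma_0^{m_i}s_i^{-1}$ with $m_i$ large, with $C_i$ and $D_i$ small neighbourhoods of $s_ix^+$ and of $Y_{s_ix^+}$ respectively.

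Your step 2 also fails as stated at non-Archimedean places. Zariski density plus unboundedness does not yield $\K_v$-regular elements, so the full flag variety is the wrong space. For example, let $h$ be a hermitian form of Witt index $1$ on $L^4$, with $L/\Q_p$ quadratic. The image of $\mathrm{SU}(h)(\Q_p)$ in $\mathrm{PGL}_4(L)$ is Zariski dense and unbounded. Yet $\alpha_2\circ\kappa$ (with $\alpha_2$ the middle simple root) is bounded on it, so none of its elements is proximal on full flags.

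The paper instead works on the partial flag variety of type $\Theta_\Gamma$, the simple roots along which $\kappa(\Gamma)$ is unbounded. There $\Theta_\Gamma$-proximal elements exist, and the $\iota$-invariance of $\Theta_\Gamma$ puts attracting and repelling points in the same space. Unboundedness of each projection is then used precisely to show that $\Gamma$ acts faithfully on this smaller space: a simple factor acting trivially would have bounded projection (Lemma~\ref{lem:proximal-exists}(3)).

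Finally, Zariski density of $\Gamma$ in ``$\prod_v\bG_v$'' is not available. The fields $\K_v$ differ, and even when they agree a diagonal embedding shows it fails. What is true, and suffices, is density in the $S$-Zariski product topology (Lemma~\ref{lem:S-Zariski-density}).
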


Other  notable examples of linear groups include braid groups, virtually special groups,
$S$-arithmetic groups, and more generally thin groups. In a  work in progress \cite{theartofbuilding},  (non-linear) groups acting on exotic buildings are covered. Another class for which the four conditions of
Theorem~\ref{thm:main} are equivalent is that of acylindrically hyperbolic groups
\cite{ozawa2025proximality,yang2025extreme,2024strictcomparisonreducedgroup}.

It is worth mentioning that the equivalence of (3) and (4) (for arbitrary groups) remained open for quite a while, until settled in the affirmative
in \cite{breuillard2017c}. The equivalence of (2) and (3) was also open for a long time, until settled in the negative in \cite{le2017c}.
It remains to be seen whether (1) and (2) are equivalent in general, or whether interesting group constructions will yield a separation.

\smallskip

We conclude by stating a more general result regarding twisted reduced group
$C^*$-algebras (see \cite{bedos2023c} for the definition),  which follows from our work together with the recent \cite[Theorem~A]{flores2026pureness}.

\begin{thm}\label{thm: twisted}
Let $\Gamma\ne\{e\}$ be a linear group with trivial amenable radical or a group as in the statement of Theorem \ref{thm:s-adic}. Then for any $2$-cocycle $\omega\in Z^2(\Gamma,\T)$, the twisted reduced group
$C^*$-algebra $C_r^*(\Gamma,\omega)$ is selfless.
\end{thm}

We further note that our results yield \emph{complete selflessness} in the sense of \cite{ozawa2025proximality}.

\smallskip

\subsection*{Acknowledgments}

The author thanks Corentin Le Bars, Emmanuel Breuillard, and Alon Dogon
for helpful correspondence. The author is also grateful to the referee for their careful reading and helpful remarks.

\section{Notations and preliminaries}\label{sec:prem}

We assume basic familiarity with algebraic groups. We nevertheless include examples that should allow a non-specialist to follow the general theme.

Let $k$  be a field, with algebraic closure $\bar{k}$.
For a $k$-algebraic variety $\bV$ and a field extension $\tilde{k}/k$, we denote by
$\bV(\tilde{k})$ the set of $\tilde{k}$-points. In the special case $\tilde{k}=\bar{k}$
we identify $\bV$ with $\bV(\bar{k})$.

Recall that a (not necessarily Hausdorff) topological space $X$ is \emph{irreducible} if it is
non-empty and cannot be written as a union of two proper closed subsets. Equivalently, any two
non-empty open subsets of $X$ intersect. The product of finitely many irreducible topological spaces
is irreducible. Moreover, a dense subset of an irreducible space is irreducible with the subspace topology.

\begin{lem}[Zariski density of $k$-points]\label{lem:kpoints-dense}
Assume that  $k$ is infinite,  and let $\bG$ be a connected reductive $k$-algebraic group.
Then $\bG(k)$ is Zariski dense in $\bG$. In particular, the topological space $\bG(k)$ endowed with
its Zariski topology is irreducible.
\end{lem}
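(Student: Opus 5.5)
This is the classical theorem of Rosenlicht and Borel (Borel, \emph{Linear Algebraic Groups}, Cor.~18.3), and the plan is to reduce it to two inputs: unirationality of $\bG$ over $k$, and density of $k$-points of affine space.

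\textbf{Step 1: $k^n$ is dense in $\mathbb{A}^n$.} Since $k$ is infinite, $k^n$ is Zariski dense in $\mathbb{A}^n=\mathbb{A}^n(\bar k)$. This follows by induction on $n$. A polynomial over $\bar k$ vanishing on $k^n$ can be written as $\sum_i f_i(x_1,\dots,x_{n-1})x_n^i$. Fix $(a_1,\dots,a_{n-1})\in k^{n-1}$; the resulting one-variable polynomial has infinitely many roots, so every coefficient satisfies $f_i(a)=0$. By induction each $f_i$ is zero. If the polynomial has coefficients outside $k$, expand it in a $k$-basis of $\bar k$ and apply the same argument to each component. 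The same statement holds for any nonempty open subset $U\subseteq\mathbb{A}^n$ defined over $k$: $U(k)=U\cap k^n$ is dense in $U$. Indeed, if $U(k)$ missed some nonempty open subset $W$ of $U$, then $k^n$ would miss the open set $W$ (which lies inside $U$), contradicting density of $k^n$.

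\textbf{Step 2: $\bG$ is unirational over $k$.} This is the key structural input. For connected reductive $\bG$, one can choose a maximal $k$-torus $\bT$, which exists by Grothendieck's theorem. Let $\Phi$ be the root system of $\bG$ with respect to $\bT$ over $\bar k$, and consider the product map
\[
\mu:\bT\times\prod_{\alpha}\mathbf{U}_\alpha\to\bG .
\]
Its image contains the open big cell, so $\mu$ is dominant. One then needs a version of this map defined over $k$. Over a perfect field this is handled by Galois descent together with the fact that tori are unirational: for a $k$-torus $\bT$ split by $L/k$, there is a surjection $R_{L/k}\mathbb{G}_m^r\to\bT$, and $R_{L/k}\mathbb{G}_m$ is an open subvariety of affine space. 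The general case is exactly Borel's Theorem~18.2: a connected reductive $k$-group is unirational over $k$, i.e.\ there is a dominant $k$-morphism $\phi:U\to\bG$ from a nonempty $k$-open subset $U\subseteq\mathbb{A}^N$. I will cite this result rather than reprove it. This is the main obstacle; the rest of the argument is formal.

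\textbf{Step 3: conclude density.} Because $\phi$ is a $k$-morphism, $\phi(U(k))\subseteq\bG(k)$. By Step 1, $U(k)$ is dense in $U$. Since $\phi$ is continuous,
\[
\overline{\phi(U(k))}\supseteq\phi(U),
\]
and $\phi(U)$ is dense in $\bG$ because $\phi$ is dominant. Therefore $\overline{\bG(k)}=\bG$.

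\textbf{Irreducibility.} A connected algebraic group is irreducible. By the general fact recalled just before the lemma, a dense subset of an irreducible space is irreducible in the subspace topology. Hence $\bG(k)$ with its Zariski topology is irreducible.
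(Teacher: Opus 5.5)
Your proof is correct and is essentially the paper's argument. The paper cites Borel's Corollary~18.3 directly for the density of $\mathbf{G}(k)$, while you cite the underlying unirationality theorem (Theorem~18.2) and reproduce the standard deduction of that corollary via density of $k$-points in affine space; the irreducibility step (connected implies irreducible, and dense subsets of irreducible spaces are irreducible) is identical, and your informal sketch in Step~2 is harmless since you defer to Borel for it.
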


\begin{proof}
Since $\bG$ is reductive and connected,
$\bG(k)$ is Zariski dense in $\bG$ \cite[Cor.\ 18.3]{borel2012linear}.

The second assertion follows as any connected $k$-algebraic group is
irreducible as a $k$-variety, and $\bG(k)$ is Zariski dense in $\bG$ by the first part; hence
$\bG(k)$ is irreducible as it is dense in $\bG$. Thus, in what follows, reductivity is used only through the density statement above, while the irreducibility conclusion is a consequence of connectedness together with this density.
\end{proof}

Let $S$ be a finite set, and for each $v\in S$ fix a field $k_v$. By an $S$-algebraic variety we mean a product of the form
\[
V_S:=\prod_{v\in S}\bV_v(k_v)
\]
where each $\bV_v$ is a $k_v$-algebraic variety.  Formally, this notion depends on the fields $k_v$ attached to the points $v\in S$, but by abuse of notation, we assume that $k_v$ are implied by the notation $S$.
We endow $V_S$ with the \emph{$S$-Zariski topology},
namely the product topology of the Zariski topologies on the factors $\bV_v(k_v)$.  Note that even if all $k_v$ are
equal to the same field $k$, so that $\bV:=\prod_{v\in S}\bV_v$ is a $k$-variety and $V_S=\bV(k)$,
the $S$-Zariski topology on $V_S$ need not coincide with the Zariski topology of the $k$-variety $\bV$ (the latter is typically finer).

Let us now consider \emph{$S$-algebraic groups}. Thus, for each $v\in S$
let $\bG_v$ be a $k_v$-algebraic group and consider the product group
\[
G:=\prod_{v\in S}\bG_v(k_v),
\]
equipped with its $S$-Zariski topology. We will say that $G$ satisfies a certain property of algebraic groups (e.g. connected, adjoint, reductive..) if each of the $\bG_v$ satisfies this property.
For $v\in S$ we denote by $\pi_v:G\to \bG_v(k_v)$ the
coordinate projection. A subset $\Gamma\subset G$ is \emph{$S$-Zariski dense} if it is dense in the
$S$-Zariski topology. An \emph{$S$-algebraic
subgroup} of $G$ means a subgroup of the form $\prod_{v\in S}\bH_v(k_v)$ with $\bH_v\le \bG_v$
a $k_v$-algebraic subgroup.

\begin{lem}\label{lem:S-Zariski-density}
Assume that each field $k_v$ is infinite, and that each $\bG_v$ is connected and reductive.  For a subgroup $\Gamma\le G$, the following are equivalent:
\begin{enumerate}
    \item[\rm(i)] $\Gamma$ is $S$-Zariski dense in $G$.
    \item[\rm(ii)] If $H=\prod_{v\in S}\bH_v(k_v)\le G$ is an $S$-algebraic subgroup containing $\Gamma$,
    then $H=G$.
    \item[\rm(iii)] For every $v\in S$, the projection $\pi_v(\Gamma)$ is Zariski dense in $\bG_v(k_v)$.
\end{enumerate}
\end{lem}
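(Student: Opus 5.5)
We prove (i)$\Rightarrow$(ii)$\Rightarrow$(iii)$\Rightarrow$(i), using two topological facts. First, since the $S$-Zariski topology is a product topology, the closure of a product set is the product of the closures: for subsets $A_v\subset\bG_v(k_v)$ we have $\overline{\prod_v A_v}=\prod_v\overline{A_v}$. Second, $\pi_v$ is continuous and open, being a coordinate projection of a product. Finally, Zariski closed subgroups of $\bG_v(k_v)$ are exactly the groups $\bH_v(k_v)$ with $\bH_v$ a $k_v$-algebraic subgroup. To see this, take the Zariski closure of the subgroup in $\bG_v$. It is an algebraic subgroup defined over $k_v$, after passing to the $k_v$-structure as in \cite{borel2012linear}. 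Its $k_v$-points recover the closed subgroup we started with.

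\emph{(i)$\Rightarrow$(ii).} Each $\bH_v(k_v)$ is Zariski closed in $\bG_v(k_v)$, so $H$ is closed in $G$ as a product of closed sets. If $H\supseteq\Gamma$ and $\Gamma$ is dense, then $H\supseteq\overline{\Gamma}=G$.

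\emph{(ii)$\Rightarrow$(iii).} Let $H_v$ be the Zariski closure of $\pi_v(\Gamma)$ in $\bG_v(k_v)$. The closure of a subgroup is a subgroup, since inversion and translations are homeomorphisms. By the remark above, $H_v=\bH_v(k_v)$ for some $k_v$-algebraic subgroup $\bH_v$. Then $\prod_v H_v$ is an $S$-algebraic subgroup containing $\Gamma$, so by (ii) it equals $G$. Hence $H_v=\bG_v(k_v)$ for every $v$.

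\emph{(iii)$\Rightarrow$(i).} This is the main obstacle, because density of each projection does not in general give density of a subset in a product. For instance, the diagonal in $\bG(k)\times\bG(k)$ has dense projections. Here, however, the topology is the product of the Zariski topologies, not the Zariski topology of the product variety, so the diagonal is in fact $S$-Zariski dense. The argument uses irreducibility from Lemma~\ref{lem:kpoints-dense}, and it also uses that $\Gamma$ is a group.

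Let $Z=\overline{\Gamma}$, a closed subgroup of $G$. It suffices to show that every non-empty basic open set $\prod_v U_v$ meets $Z$. We induct on $|S|$, or equivalently prove for each $v$ that $Z\supseteq\bG_v(k_v)$ embedded in the $v$-th coordinate.

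The cleaner route goes through the closed subgroups $Z_v=Z\cap\bG_v(k_v)$, where $\bG_v(k_v)$ sits in the $v$-th coordinate. Alternatively, fix $w$ and consider the fibre map. For $\gamma\in\Gamma$ and open $U_w$, the set $\{x\in Z:\pi_w(x)\in U_w\}$ is non-empty, by (iii).

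The argument we would make rigorous is a finite-index/irreducibility one. Each factor $\bG_v(k_v)$ is irreducible by Lemma~\ref{lem:kpoints-dense}, hence so is $G$. Then $Z$ is a closed subgroup whose projection to each irreducible factor is dense. Write $\prod_v U_v$ as $\bigcap_v\pi_v^{-1}(U_v)$, and induct on $|S|$: assume $Z'=\overline{\pi_{S\setminus\{w\}}(\Gamma)}$ is everything. Consider the closed set $Z\cap(\bG_w(k_w)\times\prod_{v\ne w}U_v\cdots)$.

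We expect this step to need care. If the pure product-topology argument fails, we fall back on the fact that a product of non-empty opens in irreducible factors is dense. We then show that the closed subgroup $Z$ has non-empty interior, since a closed subgroup with non-empty interior in an irreducible topological group is everything.
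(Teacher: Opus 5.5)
Your arguments for (i)$\Rightarrow$(ii) and (ii)$\Rightarrow$(iii) are fine; the latter is the paper's argument. The gap is (iii)$\Rightarrow$(i). You correctly flag it as the main obstacle, but you never prove it.

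The text gives several unfinished routes: the subgroups $Z_v$, a ``fibre map'', and an induction that breaks off mid-formula. The fallback, showing that $Z=\overline{\Gamma}$ has non-empty interior, comes with no reason why $Z$ should have interior.

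Moreover, the ingredients you list cannot suffice by themselves. Each of the following also holds for the diagonal in $\bG(k)\times\bG(k)$ equipped with the Zariski topology of the product variety $\bG\times\bG$:
\begin{itemize}
\item the ambient space is irreducible (by Lemma~\ref{lem:kpoints-dense});
\item $Z$ is a closed subgroup;
\item translations and inversion are homeomorphisms;
\item the projections to the factors are dense.
\end{itemize}
Yet the diagonal is a proper closed subgroup with surjective projections. So any proof must use the box structure of the $S$-Zariski topology in an essential way.

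A finiteness input is also needed. For a non-empty open $V\subseteq Y$, density of $\pi_Y(\Gamma)$ only gives $Y=\bigcup_{b\in\pi_Y(\Gamma)}Vb$. This is an infinite union, and an infinite union of translates of a proper closed set can be dense.

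The missing step, as in the paper, runs as follows.
\begin{enumerate}
\item Induct on $|S|$ and write $G=X\times Y$ with $Y=\bG_{v_0}(k_{v_0})$.
\item The Zariski topology on $Y$ is Noetherian, hence quasi-compact. So there are $b_1,\dots,b_n\in\pi_Y(\Gamma)$ with $Y=\bigcup_{j}Vb_j$.
\item Lift these to $\delta_j=(c_j,b_j)\in\Gamma$ and set $A_j=\pi_X(\Gamma\cap(X\times Vb_j))$.
\item Then $\pi_X(\Gamma)=\bigcup_j A_j$ is dense in $X$ by the induction hypothesis. Irreducibility of $X$ forces $\overline{A_{j_0}}=X$ for some $j_0$.
\item Hence $A_{j_0}c_{j_0}^{-1}$ meets any non-empty open $U\subseteq X$. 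Pick $(x,y)\in\Gamma$ with $x\in A_{j_0}$, $xc_{j_0}^{-1}\in U$ and $y\in Vb_{j_0}$. Then $(x,y)\delta_{j_0}^{-1}\in\Gamma\cap(U\times V)$.
\end{enumerate}
Equivalently, one checks $A_j=A_V c_j$ with $A_V=\pi_X(\Gamma\cap(X\times V))$, so the argument shows that $A_V$ is dense for every non-empty open $V$.

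This is exactly where the group structure, the product topology and quasi-compactness each enter. Your proposal contains neither this step nor any substitute for it.
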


In the proof below, reductivity is used only through Lemma~\ref{lem:kpoints-dense}, namely to ensure that the relevant spaces of $k_v$-points are irreducible in the Zariski topology.

\begin{proof}
{\rm(i)$\Rightarrow$(iii).} Each $\pi_v$ is continuous for the product topology, hence sends dense sets
to dense sets.

{\rm(iii)$\Rightarrow$(ii).} If $\Gamma\subseteq \prod_{v\in S}\bH_v(k_v)$, then
$\pi_v(\Gamma)\subseteq \bH_v(k_v)$ for all $v$. Since $\pi_v(\Gamma)$ is Zariski dense in
$\bG_v(k_v)$ and $\bH_v(k_v)$ is Zariski closed in $\bG_v(k_v)$, we get $\bH_v(k_v)=\bG_v(k_v)$,
hence $H=G$.

{\rm(ii)$\Rightarrow$(iii).} Fix $v_0\in S$ and let $\bH_{v_0}\le \bG_{v_0}$ be the Zariski closure
of $\pi_{v_0}(\Gamma)$ in $\bG_{v_0}$. Then $\bH_{v_0}$ is a $k_{v_0}$-algebraic subgroup and
\[
\Gamma\subseteq \bH_{v_0}(k_{v_0})\times \prod_{v\in S\setminus\{v_0\}}\bG_v(k_v),
\]
so (ii) forces $\bH_{v_0}=\bG_{v_0}$.

{\rm(iii)$\Rightarrow$(i).} It suffices to show that $\Gamma$ meets every  basic open set
$\prod_{v\in S}U_v$, where each $U_v\subseteq \bG_v(k_v)$ is a non-empty Zariski open subset.
We argue by induction on $|S|$. The case $|S|=1$ is exactly (iii).

Assume $|S|\ge 2$ and fix $v_0\in S$. Write $S=S'\sqcup\{v_0\}$ and set
\[
X:=\prod_{v\in S'}\bG_v(k_v),\qquad Y:=\bG_{v_0}(k_{v_0}),\quad \text{so }\quad G=X\times Y.
\]
Let $U\subseteq X$ and $V\subseteq Y$ be non-empty  open subsets.
Using density of $\pi_{v_0}(\Gamma)$, for each $y\in Y$ the open set $V^{-1}y$ meets $\pi_{v_0}(\Gamma)$,
hence
\[
Y=\bigcup_{b\in \pi_{v_0}(\Gamma)} Vb.
\]
The Zariski topology on $Y$ is Noetherian, hence quasi-compact, so there exist
$b_1,\dots,b_n\in \pi_{v_0}(\Gamma)$ such that $Y=\bigcup_{j=1}^n Vb_j$.
Choose $c_j\in X$ such that $\delta_j=(c_j,b_j)\in \Gamma$.

For each $j$ define the ``cut-and-project'' set
\[
A_j:=\pi_X\!\big(\Gamma\cap (X\times Vb_j)\big)\subseteq X,
\]
so that $\pi_X(\Gamma)=\bigcup_{j=1}^n A_j$.
By (iii) the subgroup $\pi_X(\Gamma)\le X$ has Zariski dense projections to each factor indexed by $S'$,
hence by induction it is $S'$-Zariski dense in $X$. Therefore
\[
X=\overline{\pi_X(\Gamma)}= \bigcup_{j=1}^n \overline{A_j}.
\]
By Lemma~\ref{lem:kpoints-dense}, each factor $\bG_v(k_v)$ ($v\in S'$) is irreducible, hence so is $X$ (this is the only place where we use that $\bG_v$ is reductive).
Thus one of the closed sets $\overline{A_j}$ equals $X$; fix $j_0$ with $\overline{A_{j_0}}=X$.

Right-translation by $c_{j_0}^{-1}$ is a homeomorphism of $X$, so $A_{j_0}c_{j_0}^{-1}$ is dense in $X$
and meets $U$. Choose $x\in A_{j_0}$ such that $x c_{j_0}^{-1}\in U$.
By definition of $A_{j_0}$, there exists $y\in Vb_{j_0}$ with $(x,y)\in\Gamma$.
Then $(x,y)\delta_{j_0}^{-1}\in \Gamma$ has $Y$-coordinate $y b_{j_0}^{-1}\in V$ and $X$-coordinate
$x c_{j_0}^{-1}\in U$, so $\Gamma\cap (U\times V)\neq\varnothing$. This proves (i).
\end{proof}

We finish this section with the following elementary  lemma on group actions.
\begin{lem}\label{lem:Machnisilizer}
Let $G$ be a group acting transitively on a set $X$. Assume that the point-stabilizers are self-normalizing, i.e. $G_x =\normalizer{G}{G_x}$ where $G_x$ is the stabilizer of a point $x\in X$.
For $c\in G $ and $x,y\in X$, if $cgx=gy$ for all $g\in G$ then $c\in \ker(G\actson X)$, and  $x=y$.
\end{lem}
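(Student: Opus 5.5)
The plan is to first force $x=y$ by a judicious choice of $g$, and then to read the hypothesis as a statement about stabilizers, where self-normalization converts it into membership of $c$ in every point-stabilizer. Throughout, write $G_z$ for the stabilizer of $z\in X$.

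\textbf{Step 1: $c$ normalizes $G_x$.} Taking $g=e$ gives $cx=y$. Substituting this back, the hypothesis becomes $cgx=g\,cx$ for all $g\in G$. Now let $h\in G_x$ and apply the identity with $g=h$:
\[
c x = c h x = h c x .
\]
Hence $h$ fixes $cx$, so $G_x\subseteq G_{cx}=cG_xc^{-1}$. Applying the same argument to $g=h^{-1}$ would not reverse the inclusion directly, so I will instead get the reverse inclusion from the next step.

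\textbf{Step 2: $c$ commutes with every $g$ modulo $G_x$.} The relation $cgx=gcx$ says precisely that $g^{-1}c^{-1}gc\in G_x$ for all $g\in G$. Taking $g\in cG_xc^{-1}$, write $g=ckc^{-1}$ with $k\in G_x$. Then
\[
g^{-1}c^{-1}gc=ck^{-1}c^{-1}\cdot c^{-1}\cdot ckc^{-1}\cdot c = ck^{-1}c^{-1}k .
\]
This lies in $G_x$, and since $k\in G_x$ we obtain $ck^{-1}c^{-1}\in G_x$. Thus $cG_xc^{-1}\subseteq G_x$. Combined with Step 1, $cG_xc^{-1}=G_x$, so $c\in \normalizer{G}{G_x}=G_x$. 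Therefore $cx=x$, and consequently $y=cx=x$.

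\textbf{Step 3: $c$ acts trivially.} By transitivity, any $z\in X$ can be written $z=gx$, and then
\[
cz = cgx = gcx = gx = z .
\]
So $c$ fixes every point of $X$, i.e.\ $c\in\ker(G\actson X)$.

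The only delicate point is Step 2, namely obtaining the full equality $cG_xc^{-1}=G_x$ rather than a single inclusion; the computation above handles it directly. A slicker alternative would be to note that $G_{cx}=cG_xc^{-1}$ contains $G_x$ and then invoke the self-normalizing hypothesis for conjugate stabilizers.
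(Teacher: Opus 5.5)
Your proof is correct and follows the paper's route: $g=e$ gives $y=cx$, the relation $cgx=gcx$ shows that $c$ normalizes $G_x$, self-normalization gives $c\in G_x$, and transitivity finishes. Your Step 2 usefully supplies the reverse inclusion $cG_xc^{-1}\subseteq G_x$, which the paper leaves implicit.

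Drop the closing ``slicker alternative'', though. A one-sided inclusion $G_x\subseteq cG_xc^{-1}$ does not force $c\in G_x$, even when stabilizers are self-normalizing. For example, $H=\langle a,b\rangle$ is self-normalizing in $\langle a,b,t\mid tat^{-1}=a^2,\ tbt^{-1}=b^2\rangle$, yet $H\subsetneq t^{-1}Ht$. So Step 2 is genuinely needed.
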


\begin{proof}
Assume $cgx=gy$ for all $g\in G$. Then in particular for $g=e$ we get  $y=cx$. Hence $cgx=gcx$, which in turn implies that $c$ is in the normalizer of $G_x$. By assumption, $c$ must in fact belong to $G_x$, so we have $cgx=gx$ for $g\in G$. Transitivity then implies that $c$ acts trivially on $X$. It then also follows that $x=y$.
\end{proof}

\section{Geometry of flag spaces\label{sec:geometry-of-flag}}

Fix an arbitrary field $k$, and connected reductive (and non-trivial) $k$-algebraic group $\bG$.
Fix also a proper $k$-parabolic subgroup $\bQ<\bG$, which means (by definition) that the homogeneous
variety $\bB:=\bG/\bQ$ is a projective $k$-variety, not reduced to a single point.
As $\bQ$ is its own normalizer \cite[Theorem 11.16]{borel2012linear}, $\bB$ can be identified with
the space of all conjugates of $\bQ$.

Two points $x,y\in\bB$ are said to be \textit{transversal} (often called opposite) if the intersection
of the corresponding conjugates of $\bQ$ (equivalently, of the stabilizer subgroups of $x$ and $y$)
is a common Levi subgroup. We denote
\[
\bY_{x}=\left\{ y\in\bB:\ x\text{ and }y\text{ are not transversal}\right\}.
\]
By the Bruhat decomposition \cite[\S14.20]{borel2012linear}, $\bY_{x}$ is a closed proper subvariety
of $\bB$, which is moreover defined over $k$ if $x\in\bB(k)$.

\begin{example}\label{exa:flags}
Let $V$ be a complex vector space of dimension $d\in\N$. A \textit{full flag} in $V$ is a tuple
$(V_{1},...,V_{d-1})$ consisting of linear subspaces of $V$ and satisfying
\[
\{0\}\subsetneq V_{1}\subsetneq...\subsetneq V_{d-1}\subsetneq V.
\]
For example, fixing a basis $e_{1},...,e_{d}$ one gets the ``standard flag'' by setting
$V_{i}=\mathrm{Sp}\{e_{1},...,e_{i}\}$. Let $B$ denote the space of all full flags in $V$.
The group $\mathrm{SL}_{d}(\C)$ acts on $B$ transitively, and the kernel of this action is the subgroup
of scalar matrices. We thus get a transitive faithful action of $G=\mathrm{PSL}_{d}(\C)$ on $B$.
The stabilizer of the standard flag is the subgroup $Q\leq G$ consisting of all upper triangular
matrices, so that we get an identification $B=G/Q$.
Elements $x,y\in B$ are transversal (in the sense defined above) if and only if the corresponding full
flags  $(V_{i})_{i}$ and $(W_{i})_{i}$ satisfy
$V_{i}\cap W_{d-i}=\{0\}$ for each $i=1,...,d-1$.
\end{example}

\begin{defn}\label{def:general-position}
A set $E\subset \bB$ is said to be in \emph{general position} if for every finite subset $E_0\subset E$
and every $x'\in E\setminus E_0$, whenever $\bigcap_{x\in E_0}\bY_x\neq\emptyset$ one has
$\bigcap_{x\in E_0}\bY_x\nsubseteq \bY_{x'}$.
\end{defn}

Often $E$ will be presented as a family $E=(x_i)_{i\in I}$. In this situation we interpret $E$ in the
multiset sense; in particular, we tacitly assume that the points $x_i$ are pairwise distinct.

\smallskip
We also need an $S$-adic analogue of Definition~\ref{def:general-position}. Let $S$ be a finite set and,
for each $v\in S$, let $k_v$ be a field, let $\bG_v$ be a connected reductive $k_v$-algebraic group, and
let $\bQ_v<\bG_v$ be a proper $k_v$-parabolic subgroup. Set
\[
\bB_{v}:=\bG_{v}/\bQ_{v},
\qquad
\mathcal{B}_{v}:=\bB_{v}(k_{v}).
\]
Equivalently, $\mathcal{B}_{v}$ identifies with the $\bG_{v}(k_v)$-homogeneous space
$\bG_{v}(k_v)/\bQ_{v}(k_v)$ (cf.\ \cite[Proposition 20.5]{borel2012linear}).
Consider the $S$-algebraic group $G=\prod_{v\in S}\bG _v(k_v)$ and the associated flag space
\[
\mathcal{B}=\mathcal{B}_S:=\prod_{v\in S}\mathcal{B}_{v}.
\]

\begin{defn}\label{def:general-position s-adic}
A set $E\subset \mathcal{B}$ is said to be in \emph{general position} if, for each $v\in S$, the
(multiset of) $v$-coordinates
\[
E_v:=\{x_v\}_{x\in E}\subset \mathcal{B}_v
\]
is in general position in the sense of Definition~\ref{def:general-position}.
\end{defn}

For $x_{v}\in\mathcal{B}_{v}$ let $\bY_{x_{v}}\subset\bB_{v}$ be the (proper) closed subvariety of
non-transversal flags as described above, and denote
\[
Y_{v}(x_{v}):=\bY_{x_{v}}(k_{v})\subset\mathcal{B}_{v}.
\]
For $x=(x_{v})_{v}\in\mathcal{B}_{S}$ define
\begin{align}\label{eq:Yx}
\widehat{Y}_{x_{v}}
:=Y_{v}(x_{v})\times\prod_{v'\in S\setminus\{v\}}\mathcal{B}_{v'},
\qquad
Y_x:=\bigcup_{v\in S}\widehat{Y}_{x_{v}}\subset\mathcal{B}.
\end{align}

The following proposition is the main geometric input of the paper. It is a uniform Noetherian statement for families of flags in general position: once sufficiently many flags are chosen in general position, the corresponding non-transversality loci have empty total intersection.

\begin{prop}\label{prop:large-intersections-general}
There exists a constant $K=K(G)$ with the following property.
If $E\subset \mathcal B_S$ is in general position then
\[
\bigcap_{x\in E_0} Y_x=\emptyset
\qquad\text{for every }E_0\subset E\text{ with }|E_0|>K.
\]
\end{prop}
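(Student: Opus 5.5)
The plan is to prove the statement first for a single factor ($|S|=1$) at the level of the geometric varieties $\bB_v$, and then to pass to the $S$-adic set $Y_x$ by a pigeonhole argument.

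\emph{Reduction to one factor.} Suppose that for each $v\in S$ there is a constant $K_v$ with the following property: whenever $E_v\subset\bB_v$ is in general position (Definition~\ref{def:general-position}), any $F\subset E_v$ with $|F|>K_v$ satisfies $\bigcap_{x\in F}\bY_x=\emptyset$. Set $K:=\sum_{v\in S}K_v$. Let $E_0\subset E$ with $|E_0|>K$, and suppose some $b=(b_v)_v$ lies in $\bigcap_{x\in E_0}Y_x$. By \eqref{eq:Yx}, each $x\in E_0$ has some $v(x)\in S$ with $b_{v(x)}\in Y_{v(x)}(x_{v(x)})$. By pigeonhole there is a $v$ with $|\{x\in E_0: v(x)=v\}|>K_v$. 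The corresponding coordinates $x_v$ are pairwise distinct, by the multiset convention, and they lie in the general-position family $E_v$. Yet $b_v$ lies in all the $\bY_{x_v}$, which contradicts the one-factor statement.

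\emph{One factor: the chain argument.} Fix $\bG,\bQ$ and $\bB=\bG/\bQ$, of dimension $n$, and work over $\bar k$. Let $x_1,\dots,x_m\in E$ be distinct and set $Z_j:=\bigcap_{i\le j}\bY_{x_i}$. General position says exactly that $Z_{j+1}=Z_j\cap\bY_{x_{j+1}}\subsetneq Z_j$ as long as $Z_j\neq\emptyset$. So it suffices to bound, uniformly in $m$, the length of a strictly decreasing chain of such intersections.

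I would use two facts.
\begin{enumerate}
\item Each $\bY_x$ is a $\bG$-translate $g\bY_{x_0}$ of a fixed subvariety, namely the complement of the big Bruhat cell. Fix a projective embedding of $\bB$ (via an ample line bundle). Then all the $\bY_x$ are cut out set-theoretically by hypersurfaces of a fixed degree $D$. The number of equations needed can also be bounded, or one can simply argue with the degrees of the cut-out sets.
\item A Bezout-type inequality in the form of Heintz or Fulton: the sum of the degrees of the irreducible components of $\bB\cap H_1\cap\dots\cap H_r$, for hypersurfaces $H_i$ of degree $\le D$, is at most $\deg(\bB)\,D^{n}$, independently of $r$.
\end{enumerate}
Consequently every $Z_j$ has at most $M:=\deg(\bB)D^n$ irreducible components.

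To each $Z_j$ attach the vector $c(Z_j)=(c_n,\dots,c_0)$, where $c_d$ is the number of $d$-dimensional irreducible components. Each entry is at most $M$. I claim $c$ strictly decreases lexicographically along the chain. Since $Z_{j+1}\subsetneq Z_j$, every component of $Z_{j+1}$ lies in a component of $Z_j$. Let $d$ be the largest dimension in which the two sets of components differ. Components of $Z_{j+1}$ of dimension $>d$ are then also components of $Z_j$, so in dimension $d$ the only possible change is that components of $Z_j$ are lost, and none can be gained. Hence the chain has length at most $(M+1)^{n+1}$, and we may take $K_v:=(M+1)^{n+1}$.

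Finally, emptiness over $\bar k$ gives emptiness of the $k_v$-points, so the $S$-adic statement follows.

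\emph{Main obstacle.} The delicate step is the uniform Bezout bound. We need it for intersections of arbitrarily many translates, not merely $n$ of them, and without excess-intersection problems. I would first try to invoke Heintz's version directly, using that $\bY_x$ is the support of an effective divisor in a fixed linear system; this holds because the big cell is affine, so its complement has pure codimension one.

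If that fails, a fallback is a constructibility argument. The family $\{g\bY_{x_0}\}_{g\in\bG}$ is algebraic, so for each fixed number of translates the intersections form a constructible family of bounded complexity. One would then combine this with a descending-chain argument in a Noetherian parameter space.
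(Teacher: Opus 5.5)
Your proof is correct. It shares the paper's outer structure: the pigeonhole reduction to a single factor (your $K=\sum_v K_v$ versus the paper's $|S|\max_v K_v$). It also uses the same observation that general position turns the partial intersections $Z_j=\bigcap_{i\le j}\bY_{x_i}$ into a strictly decreasing chain for as long as they are nonempty. And it relies on the same input that all $\bY_x$ are cut out by equations of degree $\le D$ in a fixed projective embedding. For that input the embedding must be $\bG$-equivariant, so that translates keep their degree: for instance the complete linear system of the ample bundle, or, as in the paper, a representation in which $\bQ$ stabilizes a line.

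Where you genuinely differ is in bounding the length of the chain. The paper uses pure linear algebra (Lemma~\ref{lem:noetherianity}):
\begin{itemize}
\item $Z_\ell$ is cut out by the union of the defining polynomials of $\bY_{x_1},\dots,\bY_{x_\ell}$, all of which lie in the fixed finite-dimensional space $W_{\le D}$ of forms of degree $\le D$.
\item A strict drop $Z_\ell\subsetneq Z_{\ell-1}$ forces a strict increase in the span of these polynomials.
\item Hence there are at most $\dim W_{\le D}$ strict drops.
\end{itemize}
You instead invoke a uniform refined Bezout inequality (Heintz/Fulton) to bound the number of irreducible components of every $Z_j$, and then run a lexicographic count on the component-dimension vector. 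That count is sound: a $d$-dimensional component of $Z_{j+1}$ cannot lie inside a higher-dimensional component of $Z_j$, because the latter would also be a component of $Z_{j+1}$. However, this route imports a nontrivial theorem from intersection theory and gives a much weaker bound, $(M+1)^{n+1}$.

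Once you have your Fact (1), the ``main obstacle'' you flag disappears, because the span argument already finishes the proof. Your route is more intrinsic, since it only needs a uniform complexity bound on the intersections rather than a common finite-dimensional space of equations, but here it buys nothing extra. Indeed, your own suggestion points back to the paper's argument. If each $\bY_x$ is the support of a divisor in a fixed very ample linear system, then each $\bY_x$ is a hyperplane section, $Z_j$ is $\bB$ intersected with a linear subspace, and the chain length is bounded by the ambient dimension. This is the paper's argument with $D=1$.

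One minor point: your $K_v$ depends on $\bQ_v$ as well as on $\bG_v$. This is harmless, since $\bQ_v$ is fixed in the statement. To obtain a constant depending only on $G$, as the paper does, take the maximum over the finitely many conjugacy classes of parabolic subgroups.
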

We first need the following elementary uniform Noetherian lemma.

\begin{lem}\label{lem:noetherianity}
For any $d,D>0$ there exists $K>0$ such that the following holds.
Let $\P^{d}$ be the $d$-dimensional projective space over a field $k$, and suppose that
\[
\P^{d}\supset \bX_{0}\supset \bX_{1}\supset \cdots \supset \bX_{K}
\]
is a descending chain of closed $k$-subvarieties, each of which is defined as the zero locus of a set
of homogeneous polynomials of degree $\le D$. Then the chain is improper, namely
$\bX_i=\bX_{i-1}$ for some $i=1,\dots,K$.
\end{lem}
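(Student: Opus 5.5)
We will prove Lemma~\ref{lem:noetherianity} by passing to linear algebra over the algebraic closure, where ideals generated in degree $\le D$ are controlled by finite-dimensional subspaces. Let $W:=\bar k[x_0,\dots,x_d]_{D}$ be the finite-dimensional space of homogeneous polynomials of degree exactly $D$. Then $N:=\dim W=\binom{d+D}{d}$ depends only on $d$ and $D$. We claim that $K:=N+1$ works.

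The first step is to normalise degrees. A homogeneous polynomial $f$ of degree $e\le D$ has the same zero locus in $\P^d(\bar k)$ as the family $\{x_j^{D-e}f\}_{j=0}^{d}$, because the $x_j$ have no common zero in projective space. Consequently every $\bX_i$ is the zero locus of a set of forms of degree exactly $D$. For each closed subvariety $\bX$ cut out in this way, let
\[
I_D(\bX):=\{f\in W:\ f \text{ vanishes on } \bX(\bar k)\},
\]
which is a linear subspace of $W$. We view each $\bX_i$ as its set of $\bar k$-points; this is the convention of Section~\ref{sec:prem}, and equality of $k$-subvarieties is tested on $\bar k$-points.

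The key observation is that $\bX$ equals the common zero locus $Z(I_D(\bX))$. Indeed, $\bX$ is cut out by some set $F\subset W$, and $F\subseteq I_D(\bX)$, so
\[
\bX\subseteq Z(I_D(\bX))\subseteq Z(F)=\bX .
\]
Thus $\bX\mapsto I_D(\bX)$ is injective on the subvarieties under consideration. It is also order-reversing: $\bX\supseteq \bX'$ implies $I_D(\bX)\subseteq I_D(\bX')$.

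Now take a chain $\bX_0\supset\bX_1\supset\cdots\supset\bX_K$ of length $K=N+1$. It yields an ascending chain of subspaces
\[
I_D(\bX_0)\subseteq I_D(\bX_1)\subseteq\cdots\subseteq I_D(\bX_K)\subseteq W,
\]
which has $K+1=N+2$ terms. If every inclusion $\bX_{i}\subset \bX_{i-1}$ were strict, injectivity would make every inclusion of subspaces strict. The dimension would then increase at each of the $N+1$ steps, from at least $0$ to at least $N+1>\dim W$, which is impossible. Hence $\bX_i=\bX_{i-1}$ for some $1\le i\le K$.

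The one point needing care is the normalisation step together with the identity $\bX=Z(I_D(\bX))$. These are what make the bound uniform: $K$ depends only on $d$ and $D$, not on $k$ or on the particular varieties. The remaining steps are routine linear algebra.
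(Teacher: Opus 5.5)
Your proof is correct and takes essentially the same approach as the paper. Both turn the descending chain of zero loci into an ascending chain of subspaces of a space of forms whose dimension depends only on $d$ and $D$, and then count dimensions. The differences are cosmetic: the paper works in the graded space $\bigoplus_{r\le D}W_r$ using spans of the cumulated defining sets, while you normalise all degrees to exactly $D$ and use the canonical space $I_D(\bX)$ of degree-$D$ forms vanishing on $\bX$, which makes the correspondence injective and gives the slightly smaller constant $K=\binom{d+D}{d}+1$.
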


\begin{proof}
Let $W_r$ denote the $k$-vector space of homogeneous polynomials of degree $r$ in $d+1$ variables and
set $W_{\le D}:=\bigoplus_{r=0}^D W_r$. Let $K:=\dim(W_{\le D})+1$.

Write $\bX_i=\mathcal V(A_i)$ for some finite $A_i\subset W_{\le D}$, and replace $A_i$ by
$\bigcup_{j\le i}A_j$ so that $A_0\subset\cdots\subset A_K$.
Let
\[
U_i:=\bigoplus_{r=0}^D \mathrm{Sp}(A_i\cap W_r)\ \le\ W_{\le D}.
\]
Then $U_0\subset\cdots\subset U_K$ is an ascending chain of graded subspaces of a space of dimension $K-1$, hence
$U_i=U_{i-1}$ for some $i$. Therefore
\[
\bX_i=\mathcal V(A_i)=\mathcal V(U_i)=\mathcal V(U_{i-1})=\mathcal V(A_{i-1})=\bX_{i-1},
\]
as desired.
\end{proof}

\begin{proof}[Proof of Proposition \ref{prop:large-intersections-general}]
We begin with the case $|S|=1$.
Let $S=\{v\}$ and write $\bG=\bG_v$, $\bQ=\bQ_v$, $\bB=\bB_v$, and $Y_x=\bY_x(k_v)$.
Choose an immersive representation $\bG\to \mathrm{GL}_{d+1}$ for which $\bQ$ is the stabilizer of a line
(see \cite[Thm.~5.1]{borel2012linear}). This yields a $\bG$-equivariant immersion
$\bB\hookrightarrow \P^d$ . This immersion is in fact a closed embedding because  $\bB$ is projective.

Fix $x_0\in \bB$ and write $\bY_{x_0}=\mathcal V(f_1,\dots,f_r)$ inside $\P^d$ for homogeneous
polynomials $f_i$, and set $D:=\max_i \deg(f_i)$.
For $x=gx_0$ we have $\bY_x=g\bY_{x_0}$, and since the action on $\P^d$ is by projective linear
transformations, $\bY_x$ is cut out by equations of degree $\le D$, uniformly in $x$.

Let $K(d,D)$ be as in Lemma~\ref{lem:noetherianity}.
Consider $E\subset \bB(k_v)$  in general position and $E_0=\{x_1,\dots,x_m\}\subset E$ with $m>K(d,D)$. Set
$\bX_\ell:=\bigcap_{j=1}^\ell \bY_{x_j}$. Then $\bX_1\supset\cdots\supset \bX_m$ is a descending chain
of closed subvarieties of $\P^d$ each defined by equations of degree $\le D$, so Lemma~\ref{lem:noetherianity}
gives $\bX_\ell=\bX_{\ell-1}$ for some $\ell$. Hence
$\bigcap_{j=1}^{\ell-1}\bY_{x_j}\subset \bY_{x_\ell}$, and general position forces
$\bigcap_{j=1}^{\ell-1}\bY_{x_j}=\emptyset$, so in particular $\bigcap_{j=1}^m \bY_{x_j}=\emptyset$.

The resulting bound depends on $\bG$ and the conjugacy class of $\bQ$, but since $\bG$ has only finitely
many parabolic subgroups up to conjugation, we may take the maximum over all conjugacy classes and obtain
a constant $K_v=K(\bG_v)$ that works for every choice of $\bQ_v$.

Now assume $S$ is arbitrary. For each $v\in S$ let $K_v=K(\bG_v)$ be as above and set
\[
K:=|S|\cdot \max_{v\in S} K_v.
\]
Let $E\subset \mathcal B_S$ be in general position and let $E_0\subset E$ with $|E_0|>K$.
If $y\in \bigcap_{x\in E_0} Y_x=\bigcap_{x\in E_0}\bigcup_{v\in S}\widehat Y_{x_v}$, and for each
$x\in E_0$ choose $v(x)\in S$ with $y\in \widehat Y_{x_{v(x)}}$.
By the pigeonhole principle there exist $v\in S$ and $\widetilde E_0\subset E_0$ with $|\widetilde E_0|> |E_0|/|S|$ such that
$v(x)=v$ for all $x\in \widetilde E_0$. Then $y_v\in \bY_{x_v}(k_v)$ for all $x\in \widetilde E_0$, so
\[
\bigcap_{x\in \widetilde E_0}\bY_{x_v}(k_v)\neq\emptyset,
\]
contradicting the $|S|=1$ case since $\{x_v\}_{x\in \widetilde E_0}$ is in general position and
$|\widetilde E_0|>K_v$.
\end{proof}

\section{Dynamics\label{sec:Dynamics}}

We now consider algebraic groups over local fields. Recall that a local field $\K$ is a locally compact, Hausdorff, non-discrete topological field.
By the classification of local fields, every such $\K$ is isomorphic  (as a topological field)  to
a finite field extension of $\R$, $\Q_p$ or $\F_p(\!(t)\!)$, for some prime $p$.

Let $S$ be a finite set. For each $v\in S$ let $\K_{v}$ be a local field, and let $\bG_{v}$ be a semisimple  $\K_{v}$-algebraic group. Let $\bG_v^\circ$ denote the identity connected components and   set
\[
G:=\prod_{v\in S}\bG_{v}(\K_{v}),\qquad G_c:=\prod_{v\in S}\bG^\circ_{v}(\K_{v}).
\]

All spaces of $\K_{v}$-points (and their finite products) are endowed with the Hausdorff topology coming from the local fields; this is the default topology in this section.
When we refer to the Zariski topology (or $S$-Zariski topology), we will say so explicitly.

\smallskip

We follow \cite[\S8 and \S9]{benoist2016random}.
For each $v\in S$ fix a maximal $\K_{v}$-split torus $\bA_{v}<\bG_{v}$ with Lie algebra
$\mathfrak{a}_{v}$, and consider the set $\Sigma_v$ of restricted roots. Fix a  choice of positive restricted roots $\Sigma_v^+$, and let $\Pi_{v}\subset \Sigma_v$ be the corresponding
set of simple restricted roots. Set
\[
\mathfrak{a}:=\prod_{v\in S}\mathfrak{a}_{v},
\qquad
\Pi:=\bigsqcup_{v\in S}\Pi_{v}.
\]
Let $\kappa:G\to\mathfrak{a}$ and $\lambda:G\to\mathfrak{a}$ be the Cartan and Jordan
projections \cite[\S8.7]{benoist2016random}.

\smallskip

Let $\Gamma<G$ be an $S$-Zariski dense  subgroup. Define $\Theta_{\Gamma}\subset\Pi$ by
\[
\Theta_{\Gamma}:=\bigl\{\alpha\in\Pi:\ \alpha^{\omega}(\kappa(\Gamma))\ \text{is not bounded}\bigr\}.
\]
For the precise definition of $\alpha^\omega$ see \cite[\S8.1]{benoist2016random}.
For each $v\in S$ we set
\[
\Theta_{\Gamma,v}:=\Theta_{\Gamma}\cap \Pi_{v},
\qquad\text{so that}\qquad
\Theta_{\Gamma}=\bigsqcup_{v\in S}\Theta_{\Gamma,v}.
\]
Let $\bQ_{\Theta_{\Gamma,v}}<\bG_{v}^\circ$ be the standard parabolic subgroup
of type $\Theta_{\Gamma,v}$ (as in \cite[\S8.6]{benoist2016random}). Then $\bQ_{\Theta_{\Gamma,v}}$ is connected and
$\normalizer{\bG_v^\circ}{\bQ_{\Theta_{\Gamma,v}}}=\bQ_{\Theta_{\Gamma,v}}$ \cite[Theorem 11.16]{borel2012linear}.
Set
\[
    \bQ_v:=\normalizer{\bG_v}{\bQ_{\Theta_{\Gamma,v}}}.
\]
As $\bG_v^\circ\lhd \bG_v$, we have
\[
\bQ_v^\circ \subset \bQ_v\cap \bG_v^\circ
=
\normalizer{\bG_v}{\bQ_{\Theta_{\Gamma,v}}}\cap \bG_v^\circ
=
\normalizer{\bG_v^\circ}{\bQ_{\Theta_{\Gamma,v}}}
=
\bQ_{\Theta_{\Gamma,v}}\subset \bQ_v^\circ,
\]
and therefore $\bQ_v^\circ=\bQ_{\Theta_{\Gamma,v}}$. Moreover, $\bQ_v$ is self-normalizing in $\bG_v$:
indeed, if $g\in \normalizer{\bG_v}{\bQ_v}$, then
\[
g\bQ_{\Theta_{\Gamma,v}}g^{-1}
=
g\bQ_v^\circ g^{-1}
=
(g\bQ_v g^{-1})^\circ
=
\bQ_v^\circ
=
\bQ_{\Theta_{\Gamma,v}},
\]
hence $g\in \normalizer{\bG_v}{\bQ_{\Theta_{\Gamma,v}}}=\bQ_v$. In particular,
\[
\normalizer{\bG_v}{\bQ_v}=\bQ_v.
\]

Set
\begin{align}\label{eq:boundary}
\bB_{v}:=\bG_{v}/\bQ_{v},
\qquad
\mathcal{B}_{v}:=\bB_{v}(\K_{v}), \qquad \mathcal{B}:=\prod_{v\in S}\mathcal{B}_{v}.
\end{align}
Let $F_v:=\bG_v/\bG_v^\circ$ and $F:=\prod_{v\in S}F_v$. By \cite[Lemma~9.1]{benoist2016random},
the set $\Theta_\Gamma$ is stable under the natural $F$-action. Consequently, the corresponding
parabolic type is $F_v$-invariant, and $\bB_v=\bG_v/\bQ_v$ is (canonically) identified with
$\bB_v^\circ=\bG_v^\circ/\bQ_v^\circ$; in particular,
\[
\mathcal B_v=\bB_v(\K_v)\ \cong\ \bB_v^\circ(\K_v)
\qquad\text{(see \cite[\S8.6]{benoist2014random}).}
\]
It follows that $\mathcal B=\prod_{v\in S}\mathcal B_v$ is the same flag space as in the connected
case, and hence the geometric results of the previous section apply to $\mathcal B$. In particular, $G_c$ acts transitively on $\mathcal B$.
Note that  each space $\mathcal{B}_v$ is compact because the $\K_v$-variety $\bB_v$ is projective. Hence $\mathcal B$ is compact.

\smallskip
We say that \(g\in G_c\) is \emph{\(\Theta_\Gamma\)-proximal} if
\[
    \alpha^\omega(\lambda(g))>0
    \qquad \text{for all } \alpha\in\Theta_\Gamma .
\]
This is equivalent to saying that the action of \(g\) on \(\mathcal B\) has a unique attracting
fixed point \(x_g^+\); see \cite[\S9.2]{benoist2016random} and
\cite[Def.~2.25]{gueritaud2017anosov}. In this context, ``attracting'' means that the fixed point $x_g^+$ admits a compact
neighborhood \(b^+\) such that $g^n x \rightarrow x_g^+$
uniformly for \(x\in b^+\) (this is clarified after Lemma 6.39 in \cite{benoist2016random}). Note that
in the terminology of \cite[Def.~2.25]{gueritaud2017anosov}, the same
dynamical property follows from the corresponding infinitesimal contraction
condition valid in the Archimedean case. We shall use the following slightly more precise form of this dynamics.
Recall the definition of the sets \(Y_x\) from \eqref{eq:Yx}.

\begin{fact}\label{fact:loxodromic}
Let $g\in G_c$ be $\Theta_\Gamma$-proximal. Then there exists a transverse pair
$(x_{g}^{+},x_{g}^{-})\in\mathcal{B}\times\mathcal{B}$ such that $g^nx\to x_{g}^+$ for any
$x \in \mathcal{B}\setminus Y_{x_{g}^{-}}$. This convergence is moreover uniform on compact subsets of 
$\mathcal{B}\setminus Y_{x_{g}^{-}}$.
\end{fact}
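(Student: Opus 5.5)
We reduce to one place at a time. Since $G_c=\prod_v \bG_v^\circ(\K_v)$ acts coordinatewise on $\mathcal B=\prod_v\mathcal B_v$, write $g=(g_v)_v$. The set $\Theta_\Gamma$ splits as $\bigsqcup_v\Theta_{\Gamma,v}$. Hence $\Theta_\Gamma$-proximality of $g$ means that each $g_v$ is $\Theta_{\Gamma,v}$-proximal in $\bG_v^\circ(\K_v)$. This is automatic at places with $\Theta_{\Gamma,v}=\emptyset$, where $\mathcal B_v$ is a point and $Y_v(x_v)=\emptyset$.

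By definition \eqref{eq:Yx}, a point $x\notin Y_{x_g^-}$ means $x_v\notin Y_v(x^-_{g,v})$ for every $v$. So it suffices to prove the statement factorwise. For each $v$ we must produce transverse $x^\pm_{g,v}\in\mathcal B_v$ with $g_v^nx_v\to x^+_{g,v}$ uniformly on compacts of $\mathcal B_v\setminus Y_v(x^-_{g,v})$. A product of compact sets is contained in a product of compacts of the factors, and uniform convergence in each coordinate gives uniform convergence in the product. So the statement follows.

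For a single place, I would use the Jordan decomposition $g_v=g_eg_hg_u$ (commuting elliptic, hyperbolic and unipotent parts). After conjugation, $g_h=a\in A_v^+$, and $\alpha(\lambda(g_v))>0$ for $\alpha\in\Theta_{\Gamma,v}$ means that $a$ strictly expands the root spaces of those simple roots. Let $\bQ^-$ be the parabolic opposite to $\bQ_{\Theta_{\Gamma,v}}$ relative to the Levi $\bL=Z(a)$-type subgroup containing the centralizer of $a$. Set $x^+=[\bQ_{\Theta}]$ and $x^-=[\bQ^-]$; these are transverse by construction, since their stabilizers meet in the common Levi $\bL$.

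The big cell $\mathcal B_v\setminus Y_v(x^-)$ is the orbit $U^-\cdot x^+$, where $U^-$ is the unipotent radical of $\bQ^-$. This follows from the Bruhat decomposition used in Section~\ref{sec:geometry-of-flag}. It is identified with $U^-\cong\mathfrak u^-$ via $u\mapsto ux^+$, and $g_v$ acts on it by conjugation, since $g_v\in \bL$ fixes $x^+$. Under this conjugation $a^n$ contracts $\mathfrak u^-$ exponentially: all weights of $\mathfrak u^-$ are negative combinations involving some $\alpha\in\Theta$. The parts $g_e$ and $g_u$ lie in $\bL$, commute with $a$, and grow at most polynomially or stay bounded on $\mathfrak u^-$. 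Hence $g_v^nug_v^{-n}\to e$ uniformly on compact subsets of $U^-$, which gives uniform convergence $g_v^nx\to x^+$ on compacts of the big cell. Alternatively, one can simply cite \cite[\S9.2, Lemma 6.39ff]{benoist2016random} and \cite[Def.~2.25]{gueritaud2017anosov} for this single-place statement, identifying their repelling set with $Y_v(x^-)$.

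The main obstacle is this last identification. We need to check that the complement of Benoist--Quint's basin of attraction, which is defined via the representation-theoretic proximality of $g_v$ in the representations $V_\alpha$, is exactly the non-transversality locus $\bY_{x^-}(\K_v)$. This amounts to showing that $y$ is transverse to $x^-$ iff for each $\alpha\in\Theta$ the highest-weight line of $y$ in $V_\alpha$ is not contained in the repelling hyperplane of $g$. That in turn follows from the Bruhat cell description: the open cell is characterized by the nonvanishing of the highest-weight coefficients.
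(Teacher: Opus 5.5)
Your route differs from the paper's and is sound in outline, but as written it has one genuine gap. You set $x^-=[\bQ^-]$, with $\bQ^-$ the parabolic opposite to $\bQ_\Theta$ along $L_\Theta$, and call $(x^+,x^-)$ transverse ``by construction''. This assumes $[\bQ^-]$ is a point of $\mathcal B_v=\bG_v^\circ/\bQ_\Theta$, which fails in general. $\bQ^-$ is conjugate, by a representative of $w_0$, to the standard parabolic of type $\iota(\Theta)$, where $\iota=-\mathrm{Ad}_{w_0}$. So it lies in the conjugacy class of $\bQ_\Theta$ only if $\iota(\Theta_{\Gamma,v})=\Theta_{\Gamma,v}$.

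Without this symmetry the Fact is false. Take $\mathrm{SL}_3$ with $\Theta=\{\alpha_1\}$, so $\mathcal B=\mathbb{P}^2$. The parabolics opposite to a line stabilizer are plane stabilizers, so no two points of $\mathcal B$ are transverse and $Y_x=\mathcal B$ for every $x$. The repelling set of a proximal element, a projective line, is a point of the dual flag variety. Your argument uses nothing specific to $\Theta_\Gamma$, so it would ``prove'' this false statement too. The missing input is the observation the paper records right after the Fact: $\kappa(\gamma^{-1})=\iota(\kappa(\gamma))$ and $\Gamma=\Gamma^{-1}$. Hence $\Theta_\Gamma$, and each $\Theta_{\Gamma,v}$, is $\iota$-stable, so $\bQ^-$ is conjugate to $\bQ_\Theta$ and $x^-\in\mathcal B_v$.

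A smaller point concerns the Jordan decomposition over non-Archimedean $\K_v$. A decomposition $g_v=g_eg_hg_u$ with $g_h$ conjugate into $\bA_v(\K_v)$ can fail: over $\mathbb{Q}_p$ an eigenvalue may have absolute value $p^{-1/2}$, and over $\mathbb{F}_q((t))$ the Jordan components need not even be rational. So run your argument for a suitable power $g_v^N$. Then $g_v$ commutes with the hyperbolic part $a$ of $g_v^N$, hence lies in $Z(a)\subset L_\Theta$, fixes $x^\pm$ and preserves the big cell. Writing $g_v^{Nn+r}=g_v^{Nn}g_v^r$ with $0\le r<N$ transfers uniform convergence on compacta to $g_v$. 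In the non-Archimedean case $g_eg_u$ generates a relatively compact subgroup of $L_\Theta$. So contraction of $U^-$ by $a^n$ suffices, and you need no identification $U^-\cong\mathfrak u^-$ (which is problematic in positive characteristic).

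Compared with the paper: it reduces to one place as you do. It then cites pointwise convergence on $\mathcal B_v\setminus Y_v(x^-_{g_v})$, from Gu\'eritaud--Guichard--Kassel--Wienhard in the Archimedean case and Benoist--Quint in general. It upgrades this to local uniformity by a covering argument: each point of a compact $C$, together with a neighbourhood, is pushed by some $g_v^{n_x}$ into the interior of a compact attracting neighbourhood $b^+$ of $x^+$, and finitely many such neighbourhoods cover $C$. Your big-cell computation gives uniformity directly. It also identifies the basin with $\mathcal B_v\setminus Y_v(x^-)=U^-(\K_v)x^+$ for free. So the ``main obstacle'' you name, matching Benoist--Quint's basin with $Y_v(x^-)$, only arises on the citation route. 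The price is the Jordan-decomposition bookkeeping above, which the paper's citations absorb.
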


\begin{proof}
Let $g\in G_c$ be $\Theta_\Gamma$-proximal. For each $v\in S$, the corresponding statement for
$g_v\in\bG_v(\K_v)$ acting on $\mathcal B_v$ is standard.
Indeed, in the Archimedean case, pointwise convergence is stated explicitly in
\cite[Lemma~2.26]{gueritaud2017anosov},
and the proof there uses the representation-theoretic characterization of proximality
in terms of proximality in a suitable projective representation.
This characterization is available over arbitrary local fields as well; see
\cite[\S8--\S9]{benoist2014random} and, in particular, \cite[\S9.2]{benoist2016random} for the
dynamical picture on $\mathcal B_v$. 
Thus there exists a transverse pair $(x_{g_v}^+,x_{g_v}^-)\in
\mathcal B_v\times\mathcal B_v$ such that $g_v^n x \to x_{g_v}^+$ for every
$x\in \mathcal B_v\setminus Y_v(x_{g_v}^-)$. 

This pointwise convergence is upgraded to uniform convergence on compact subsets as follows. By definition of
$x_{g_v}^+$ being an attracting fixed point, it admits a compact neighborhood $b^+$ such that, for every neighborhood $U$ of
$x_{g_v}^+$, there exists $N_0$ with
$g_v^n(b^+)\subset U$ for all $n\ge N_0$.
Now let $C\subset \mathcal B_v\setminus Y_v(x_{g_v}^-)$ be compact. For each $x\in C$, pointwise convergence
gives $n_x$ such that $g_v^{n_x}x$ is in the interior of $b^+$. By continuity, there is a neighborhood $O_x$ of $x$
such that $g_v^{n_x}(O_x)$ is contained in the interior of $b^+$. By compactness of $C$, finitely many such $O_x$ cover $C$.
For the corresponding finitely many integers $n_x$, choose $N$ so large that $N-n_x\ge N_0$ for all of them. Then, for every
$n\ge N$ and every $y\in C$, choosing $x$ with $y\in O_x$ gives
\[
    g_v^n y = g_v^{n-n_x}(g_v^{n_x}y)\in U.
\]
Thus $g_v^n|_C$ converges uniformly to the constant map $x\mapsto x_{g_v}^+$.

Finally, the $S$-adic statement follows by taking products: the topology on $\mathcal B$ is the
product topology, and $Y_{x}$ is defined so that $\mathcal B\setminus Y_{x_g^-}$ is a union of
products of sets of the form $\mathcal B_v\setminus Y_v(x_{g_v}^-)$.
\end{proof}

We refer to $(x_{g}^{+},x_{g}^{-})$ as the \emph{attracting/repelling fixed points} of the proximal element $g$. It is worth mentioning that the fact that both points $x_{g}^{+},x_{g}^{-}$ lie in the same flag space $\mathcal B$ does not hold in the general setting of $\Theta$-proximality for arbitrary subsets of simple roots $\Theta$. Rather, it relies crucially on a certain symmetry of $\Theta_\Gamma$.

Indeed, let $\iota:\mathfrak a^+\to \mathfrak a^+$ be the opposition involution, defined by $\iota=-\mathrm{Ad}_{w_0}$, where $w_0$ is the longest element of the Weyl group with respect to the fixed choice of positive roots. We have
\[
\kappa(\gamma^{-1})=\iota(\kappa(\gamma)).
\]
Hence $\kappa(\Gamma)$ is $\iota$-stable, and $\Theta_{\Gamma}$ is unchanged when passing from $\Gamma$ to $\Gamma^{-1}$. In general, however, if $g$ is $\Theta$-proximal, then its attracting point lies in the flag space corresponding to $\Theta$, whereas its repelling point lies in the opposite flag space corresponding to $\iota(\Theta)$ (as explained in  \cite[Lemma 2.26]{gueritaud2017anosov}). This point is essential here, and it is also crucial for the following lemma.

\begin{lem}\label{lem:proximal-exists}
Assume that $\Gamma<G$ is $S$-Zariski dense and denote $\Gamma_c=\Gamma\cap G_c$.
\begin{enumerate}
\item $\Gamma_c$ contains $\Theta_{\Gamma}$-proximal elements.
\item If $\gamma\in\Gamma$ is $\Theta_{\Gamma}$-proximal with attracting/repelling fixed points
$(x^{+},x^{-})$, then $\gamma^{-1}$ is $\Theta_{\Gamma}$-proximal with attracting/repelling fixed
points $(x^{-},x^{+})$.
\item If a $\K_v$-simple factor  $\bH_v(\K_v)$ of $\bG_v^\circ(\K_v)$ is in the kernel of  $\bG_v(\K_v)\actson \mathcal B_v$ then the projection of $\Gamma_c$ into $\bH_v(\K_v)$ is bounded.
\end{enumerate}
\end{lem}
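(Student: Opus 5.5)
For part (1) I will invoke Benoist's theorem on the existence of proximal elements in Zariski dense subgroups, in the $S$-adic form of \cite[\S9]{benoist2016random}. Since $\Gamma$ is $S$-Zariski dense in $G$, Lemma~\ref{lem:S-Zariski-density} shows that $\Gamma_c$ is $S$-Zariski dense in $G_c$. Indeed, $\Gamma_c$ has finite index in $\Gamma$, so the closure of each $\pi_v(\Gamma_c)$ has finite index in $\bG_v$; it is therefore contained in, and of finite index in, the connected group $\bG_v^\circ$, hence equal to it. The set $\Theta_{\Gamma_c}$ coincides with $\Theta_\Gamma$, because $\kappa(\Gamma)$ lies in a bounded neighbourhood of $\kappa(\Gamma_c)$: write $\gamma=\gamma_c f$ with $f$ ranging over finitely many coset representatives and use $\|\kappa(gh)-\kappa(g)\|\le C_h$. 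Benoist's theorem (\cite[Thm.~9.?]{benoist2016random} or \cite{benoist2014random}) then states that the Zariski dense semigroup $\Gamma_c$ contains elements $g$ with $\alpha^\omega(\lambda(g))>0$ for every $\alpha\in\Theta_{\Gamma_c}$. More precisely, the limit cone of $\lambda(\Gamma_c)$ coincides with that of $\kappa(\Gamma_c)$ and has nonempty interior. So for $\alpha\in\Theta$ with $\alpha^\omega\circ\kappa$ unbounded, some element of the cone has $\alpha^\omega>0$. A generic element of the interior of the limit cone then has $\alpha^\omega>0$ simultaneously for all $\alpha\in\Theta$, and it is realised by the Jordan projection of some $\gamma\in\Gamma_c$. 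The main obstacle here is citing the $S$-adic, possibly positive characteristic, version correctly. I would phrase the argument as: Benoist's theorem gives $\gamma\in\Gamma_c$ that is $\Theta_\Gamma$-proximal in every factor simultaneously (\cite[Prop.~9.?]{benoist2016random}, for the product setting).

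For part (2), note that $\lambda(\gamma^{-1})=\iota(\lambda(\gamma))$ and that $\Theta_\Gamma$ is $\iota$-stable, since $\Gamma=\Gamma^{-1}$. Hence $\alpha^\omega(\lambda(\gamma^{-1}))=(\iota\alpha)^\omega(\lambda(\gamma))>0$, so $\gamma^{-1}$ is $\Theta_\Gamma$-proximal. To identify its fixed points, I will use the projective-representation description from Fact~\ref{fact:loxodromic}. The repelling point $x_\gamma^-$ is determined by the attracting hyperplane of $\gamma$, which is the attracting point of $\gamma^{-1}$ in the opposite flag variety of type $\iota(\Theta_\Gamma)=\Theta_\Gamma$; this is the same flag variety $\mathcal B$. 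Alternatively, argue dynamically. Let $y^\pm$ be the fixed points of $\gamma^{-1}$. Since $x^-$ is fixed by $\gamma^{-1}$, if $x^-\notin Y_{y^-}$ then $\gamma^{-n}x^-\to y^+$, which forces $x^-=y^+$. Genericity handles the remaining cases: $x^+$ is $\gamma$-fixed and attracting, so it cannot be attracting for $\gamma^{-1}$ unless it is the repelling point $y^-$. To make this rigorous I will pick $x$ transverse to both $x^-$ and $y^-$, and compare the orbits $\gamma^{\pm n}x$.

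For part (3), let $\bH_v(\K_v)$ act trivially on $\mathcal B_v$. Then $\bH_v$ is contained in every conjugate of $\bQ_v^\circ$, so the restricted simple roots of $\bH_v$ are disjoint from $\Pi_v\setminus\Theta_{\Gamma,v}$ complement-wise. More precisely, all simple roots of $\bH_v$ lie outside $\Theta_{\Gamma,v}$, since the standard parabolic of type $\Theta$ contains a simple factor exactly when that factor's roots avoid $\Theta$. By definition of $\Theta_\Gamma$, this means $\alpha^\omega\circ\kappa$ is bounded on $\Gamma_c$ for every simple root $\alpha$ of $\bH_v$. The Cartan projection of the image in $\bH_v(\K_v)$ is the corresponding component of $\kappa$, up to bounded error. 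Bounded Cartan projection implies bounded image by the Cartan decomposition $KA^+K$ with $K$ compact, so the projection of $\Gamma_c$ to $\bH_v(\K_v)$ is bounded.
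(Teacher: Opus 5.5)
Your proposal is correct and follows the paper's proof. Part (1) is a citation of Benoist--Quint's existence of $\Theta_\Gamma$-proximal elements. Part (2) uses the $\iota$-stability of $\Theta_\Gamma$ (from $\kappa(\gamma^{-1})=\iota(\kappa(\gamma))$), so that attracting and repelling points lie in the same $\mathcal B$ and are swapped under inversion. Part (3) notes that a simple factor acting trivially on $\mathcal B_v$ has all its simple roots outside $\Theta_{\Gamma,v}$, giving a bounded Cartan projection and hence boundedness via the Cartan decomposition.

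You should drop the limit-cone heuristic in (1), which is wrong as stated in the $S$-adic setting. The cone has empty interior in $\mathfrak a^+$ whenever the projection to some isotropic factor is bounded. Also, unboundedness of $\alpha^\omega\circ\kappa$ does not by itself give a point of the cone with $\alpha^\omega>0$. You should likewise drop the unfinished ``dynamical alternative'' in (2). The citation and the dual-representation argument are what actually carry those two parts.
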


\begin{proof}
(1) This is \cite[Lemma~9.2]{benoist2016random}.

\smallskip
\noindent(2) As explained above, the identity $
\kappa(\gamma^{-1})=\iota(\kappa(\gamma))$
implies that $\Theta_{\Gamma}$ is invariant under the opposition involution $\iota$. Thus the attracting and repelling fixed points of a $\Theta_{\Gamma}$-proximal element both belong to the same flag space $\mathcal B$. Since the inverse dynamics interchange attracting and repelling directions, $\gamma^{-1}$ is again $\Theta_{\Gamma}$-proximal, with attracting/repelling fixed points $(x^{-},x^{+})$.

\smallskip
\noindent(3) Let \(v\in S\), and let \(\bH_v(\K_v)\) be a \(\K_v\)-simple factor of \(\bG_v^\circ\) which lies in the kernel of the action
\(\bG_v(\K_v)\actson \mathcal B_v\).
Then $\bH_v(\K_v)$ is contained in the stabilizer of each point in $\mathcal B_v$, particularly in   \(\bQ_v(\K_v)\). By the way the parabolic $\bQ_v$ corresponds to the set of roots $\Theta_{\Gamma,v}$ (see \cite[\S8.6]{benoist2016random}) we in particular see that \(\Theta_{\Gamma,v}\) does not include any roots coming from \(\bH_v\). In other words  \(\alpha^{\omega}(\kappa(\Gamma))\) is bounded for every simple root \(\alpha\) of \(\bH_v\).
Consequently, \(\kappa_{\bH_v}(\Gamma_v)\) is bounded, where \(\Gamma_v\) is the projection of \(\Gamma^\circ\) to \(\bH_v(\K_v)\) and \(\kappa_{\bH_v}\) is the Cartan projection of \(\bH_v(\K_v)\).
It follows that \(\Gamma_v\) is bounded. Indeed, this follows in the Archimedean case from the Cartan decomposition
\[
\bG_v(\K_v)=K_v\exp(\mathfrak a_v^{+})K_v,
\]
where \(K_v\) is a maximal compact subgroup. The non-Archimedean case is covered in  \cite{parreau2003elliptic}.
\end{proof}

We now recall Ozawa's $\mathrm P_{\mathrm{PHP}}$ condition \cite[\S8]{ozawa2025proximality}.
\begin{defn}\label{def:php}
An action of a group $\Gamma$ on a set $X$ is said to have property $\mathrm P_{\mathrm{PHP}}$ if for any finite set $F\subset\Gamma$ and every $\epsilon>0$ there exist $n\in\N$,
elements $\gamma_{1},...,\gamma_{n}\in\Gamma$, and subsets $C_{i}\subset D_{i}\subset X$
such that:
\begin{enumerate}
\item The members of
\[
\left\{ aC_{i}\mid a\in F,1\leq i\leq n\right\} \cup
\left\{ a\gamma_{i}^{-1}\left(X\backslash D_{i}\right)\mid a\in F,1\leq i\leq n\right\}
\]
are pairwise disjoint.
\item The intersection of any subcollection of
\[
\left\{ D_{i}\mid1\leq i\leq n\right\} \cup
\left\{ \gamma_{i}^{-1}\left(X\backslash C_{i}\right)\mid1\leq i\leq n\right\}
\]
of size $\geq\epsilon n^{1/2}$ is empty.
\end{enumerate}
We say that $\Gamma$ has property $\mathrm P_{\mathrm{PHP}}$ if the left regular action
$\Gamma\actson \Gamma$ (by left translation) has property $\mathrm P_{\mathrm{PHP}}$.
\end{defn}

Note that if $\Gamma$ admits some action $\Gamma\actson X$ with property $\mathrm P_{\mathrm{PHP}}$,
then $\Gamma$ has property $\mathrm P_{\mathrm{PHP}}$. Indeed, fix $x \in X$, and for $A\subset X$
let $\hat{A}:=\{\gamma\in \Gamma:\gamma.x\in A\}$. It is straightforward to verify that if
$C_i,D_i\subset X$ satisfy both conditions, then so do $\hat{C}_i$ and $\hat{D}_i$
(with the same elements $\gamma_1,...,\gamma_n \in \Gamma$).

\smallskip
In order to prove $\mathrm{P}_{\mathrm{PHP}}$ we will need the following:
\begin{prop}\label{prop:good-sequence}
Let $F\subset G$  be a finite set such that  $\ker(G\actson \mathcal B)\cap F^{-1}F=\{e\}$. Fix a finite set $E\subset\mathcal{B}$, and $n\in\N$.
Then there exists a non-empty $S$-Zariski open subset $\Omega\subset G_c^{n}$ such that every
$(g_{1},\dots,g_{n})\in\Omega$ satisfies:
\begin{enumerate}
\item The points $ag_{i}x$, running over all $a\in F\cup\{e\}$, $1\le i\le n$ and $x\in E$, are pairwise distinct.
\item For each $x\in E$, the set $\{g_{1}x,\dots,g_{n}x\}\subset\mathcal{B}$ is in general position
(Definition~\ref{def:general-position s-adic}).
\end{enumerate}
\end{prop}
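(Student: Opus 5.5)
Each condition is a finite conjunction of conditions of the form ``a certain $S$-Zariski closed subset of $G_c^n$ does not contain the tuple''. So $\Omega$ will be an intersection of finitely many $S$-Zariski open sets, and it suffices to show each one is non-empty. The space $G_c^n=\prod_{v}\bigl(\bG_v^\circ(\K_v)\bigr)^n$ is irreducible for the $S$-Zariski topology: each factor is irreducible by Lemma~\ref{lem:kpoints-dense}, since $\K_v$ is infinite and $\bG_v^\circ$ is connected reductive. Hence a finite intersection of non-empty open sets is non-empty. Since the $S$-Zariski topology is a product topology, I will work coordinatewise where possible. Openness comes from the fact that orbit maps $g\mapsto gx$, $G_c\to\mathcal B$, are morphisms, and the conditions ``$y\neq z$'' and ``$y\notin \bY_{x}$'' are algebraic in the relevant points.

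\emph{Condition (1).} Fix $(a,i,x)\neq(a',i',x')$. The bad set is $\{ag_ix=a'g_{i'}x'\}$, which is closed. If $i\neq i'$, the set of $(g_i,g_{i'})$ with $ag_ix=a'g_{i'}x'$ is proper, because we may move $g_{i'}$ freely while fixing $g_i$ and $G_c$ acts transitively on $\mathcal B$, which is not a point. If $i=i'$, the bad set is $\{g: a'^{-1}a\,g x=gx'\}$. If it were all of $G_c$, then by irreducibility/density it would hold for all $g\in G_c$. Lemma~\ref{lem:Machnisilizer}, applied to the transitive action of $G_c$ on $\mathcal B$ with self-normalizing stabilizers $\bQ_v$, then gives $x=x'$ and that $c=a'^{-1}a$ acts trivially. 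The latter forces $a=a'$ by the hypothesis $\ker(G\actson\mathcal B)\cap F^{-1}F=\{e\}$, contradicting distinctness. Here I must be careful that $c\in G$ rather than $G_c$. I expect the lemma's argument still goes through, since stabilizers in $G$ are the $\bQ_v(\K_v)$, which are self-normalizing, and it suffices that $cgx=gx'$ for $g$ in the transitive subgroup $G_c$.

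\emph{Condition (2)} is the main obstacle. Fix $x\in E$ and $v\in S$. We need that for every $E_0\subset\{1,\dots,n\}$ and $j\notin E_0$ with $Z:=\bigcap_{i\in E_0}\bY_{g_ix_v}\neq\emptyset$, we have $Z\not\subset\bY_{g_jx_v}$. Note that the condition ``$\bigcap_{i\in E_0}\bY_{g_ix_v}\neq\emptyset$ implies $Z\not\subset \bY_{g_jx_v}$'' is not obviously Zariski open. I will instead argue by induction on $|E_0|$ and use genericity of $g_j$ alone.

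Fix $g_i$ for $i\in E_0$, so $Z$ is a fixed non-empty closed subvariety of $\bB_v$, defined over $\bar\K_v$. The set of $h\in\bG_v^\circ$ with $Z\subset h\bY_{x_v}$ is Zariski closed. It is proper because, picking $z\in Z$, the set $\{h: z\in h\bY_{x_v}\}=\{h: h^{-1}z\in\bY_{x_v}\}$ is already a proper closed subset, as $\bY_{x_v}\subsetneq\bB_v$ and the action is transitive over $\bar\K_v$. The issue is that $Z$ varies with $(g_i)_{i\in E_0}$. To handle this, I will consider the incidence set
\[
\{(g_1,\dots,g_n):\ \exists z\in Z,\ \text{and } Z\subset \bY_{g_jx_v}\},
\]
and show it is contained in a proper closed subset. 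Equivalently, I will use the stronger open condition that $\dim\bigl(Z\cap \bY_{g_jx_v}\bigr)<\dim Z$ for each irreducible component, which is generic by upper semicontinuity of fibre dimension on the family over $(\bG_v^\circ)^n$. Working over $\bar\K_v$ and then restricting to $\K_v$-points gives a non-empty open set by Lemma~\ref{lem:kpoints-dense}. Taking the finite intersection over all $x$, $v$, $E_0$, $j$ and all pairs in (1) yields $\Omega$.
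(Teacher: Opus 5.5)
Your overall plan and your treatment of (1) match the paper's. Each failure condition is placed inside a proper closed subset of the irreducible space $G_c^n$, using the same case split and Lemma~\ref{lem:Machnisilizer} for $i=i'$. You raise a caveat about $c\in G$ versus $g\in G_c$, which the paper passes over. It resolves as you expect. From $cgx=gcx$ for all $g\in G_c$ one gets $(G_c)_x=(G_c)_{cx}=c(G_c)_xc^{-1}$. So $c$ normalizes the Zariski closure of $(G_c)_x$, which is a product of conjugates of the $\bQ_v^\circ$. Since $\bQ_v=\normalizer{\bG_v}{\bQ_v^\circ}$, this forces $cx=x$, and transitivity of $G_c$ then shows that $c$ acts trivially.

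\textbf{The gap is at the end of (2).} Your slice argument is correct: for fixed $(g_i)_{i\in E_0}$ with $Z\neq\emptyset$, the transporter $\{h: Z\subset h\bY_{x_v}\}$ is a proper closed subset of $\bG_v^\circ$. The globalization, however, does not work as written.
\begin{itemize}
\item The component-wise dimension condition is strictly stronger than $Z\not\subset\bY_{g_jx_v}$, not equivalent to it.
\item Neither condition cuts out a Zariski open set in general, because $Z$ jumps in the family; the example below shows this for the weaker one.
\item Upper semicontinuity of fibre dimension controls loci where a single fibre dimension is large. It does not compare components of $Z$ with those of $Z\cap\bY_{g_jx_v}$.
\item For the same reason, your opening claim that each failure locus is closed is false for (2). $\Omega$ must be the complement of the \emph{closures} of the failure loci.
\end{itemize}

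The missing ingredient is constructibility. Work over $\bar\K_v$.
\begin{enumerate}
\item The set $\{Z\neq\emptyset\}\subset(\bG_v^\circ)^n$ is closed, since $\bB_v$ is complete.
\item The failure locus $\mathcal F$ is this set minus the image of the locally closed set $\{(g,z): z\in Z,\ z\notin\bY_{g_jx_v}\}$. Hence $\mathcal F$ is constructible by Chevalley.
\item If $\mathcal F$ were dense in the irreducible $(\bG_v^\circ)^n$, it would contain a non-empty open set $U$. Freeze all coordinates except the $j$-th at a point of $U$; this also freezes $Z$, since $j\notin E_0$. The resulting slice is a non-empty open subset of $\bG_v^\circ$ lying inside a proper closed transporter, which is absurd.
\item Hence $\overline{\mathcal F}$ is proper. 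Its trace on the Zariski dense set $\bG_v^\circ(\K_v)^n$ (Lemma~\ref{lem:kpoints-dense}) is then proper as well.
\end{enumerate}

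\textbf{Comparison with the paper.} The paper avoids this step by asserting that $\tilde Z_v(I,m;x)=\{\bigcap_{i\in I}\bY_{h_ix_v}\subset\bY_{h_mx_v}\}$ is Zariski closed, citing Borel's transporter result, and then exhibiting one point outside it. Borel's result, however, is for a \emph{fixed} source set, and here the set is not closed in general. For example, take full flags of $\mathrm{PGL}_3$, written as pairs (point $\subset$ line). Then $(q,m)\in\bY_{(p,\ell)}$ if and only if $p\in m$ or $q\in\ell$. Set $x_1=(p,\ell_1)$, $x_2=(p,\ell_2)$, $x_3=(p,\ell_3)$ with $\ell_1\neq\ell_2$ and $\ell_3\neq\ell_1$. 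Then $\bY_{x_1}\cap\bY_{x_2}=\{(q,m):p\in m\}\subset\bY_{x_3}$. But specializing $\ell_2$ to $\ell_1$ turns the intersection into $\bY_{x_1}$, and $\bY_{x_1}\not\subset\bY_{x_3}$.

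So you were right to single out the varying $Z$ as the real obstacle. A single point outside a merely constructible set would not suffice. What is needed, in your write-up and in the paper's, is the constructibility-plus-slices argument above.
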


\begin{proof}
Since the conditions are finite in number, it suffices to show that for each individual failure
condition, the corresponding failure locus is contained in a proper Zariski closed subset of $G^{n}_c$.
Then $\Omega$ may be taken as the complement of the (finite) union of these proper closed subsets.
Note that $G^{n}_c$ is irreducible in the $S$-Zariski topology; hence such a finite union
cannot cover $G^{n}_c$, and $\Omega$ is non-empty.

\smallskip
\noindent\emph{(1) Distinctness.}
Fix $(a,i,x)\neq(b,j,y)$ with $a,b\in F\cup\{e\}$, $1\le i,j\le n$, $x,y\in E$ and consider
\[
Z_{a,i,x}^{b,j,y}:=\{(g_{1},\dots,g_{n})\in G_c^{n}\,:\, ag_{i}x=bg_{j}y\}.
\]
This is $S$-Zariski closed. We claim it is proper.

If $i\neq j$, fix $(g_{\ell})_{\ell\neq i}$ arbitrarily. Then the condition
$ag_{i}x=bg_{j}y$ forces $g_{i}x$ to be a prescribed point of $\mathcal{B}$, hence cuts out a
proper subset of $G_c$ in the $i$-th coordinate since $\mathcal{B}$ is not a point.
If $i=j$ and $a=b$, then $ag_{i}x=ag_{i}y$ forces $x=y$, so $Z_{a,i,x}^{a,i,y}=\emptyset$ when
$x\neq y$.
If $i=j$ and $a\neq b$, then $c=b^{-1}a$ is, by assumption, not in the kernel of $G\actson \mathcal B$. Now $ag_{i}x=bg_{i}y$ becomes
$cg_{i}x=g_{i}y$, and this is avoided for some $g_{i}\in G_c$ by Lemma~\ref{lem:Machnisilizer}.
Thus $Z_{a,i,x}^{b,j,y}$ is proper in all cases.

\smallskip
\noindent\emph{(2) General position.}
Fix $x\in E$. It suffices to impose general position
separately in each coordinate $v\in S$.
Fix $v\in S$ and a pair $(I,m)$ with $I\subset\{1,\dots,n\}$ finite and $m\notin I$.
Consider the failure locus in the $v$-factor:
\[
Z_{v}(I,m;x)
:=\Bigl\{(h_{1},\dots,h_{n})\in \bG_{v}^\circ(\K_{v})^{n}\,:
\bigcap_{i\in I}\bY_{h_{i}x_{v}}\neq\emptyset\ \text{and}\
\bigcap_{i\in I}\bY_{h_{i}x_{v}}\subset \bY_{h_{m}x_{v}}
\Bigr\}.
\]
It is contained in the set
\[
\tilde{Z}_{v}(I,m;x)
:=\Bigl\{(h_{1},\dots,h_{n})\in \bG_{v}^\circ(\K_{v})^{n}\,:
\bigcap_{i\in I}\bY_{h_{i}x_{v}}\subset \bY_{h_{m}x_{v}}
\Bigr\},
\]
which is Zariski-closed (see e.g.\ \cite[Prop.\ 1.7]{borel2012linear}).
Moreover, $\tilde{Z}_{v}(I,m;x)$ (and particularly $Z_{v}(I,m;x)$)  is proper: taking $h_{i}=e$ for $i\in I$ gives
$\bigcap_{i\in I}\bY_{h_{i}x_{v}}=\bY_{x_{v}}\neq\emptyset$, and since
$\bY_{x_{v}}\subsetneq\bB_{v}$ and $\bG_v^\circ(\K_v)$ acts transitively on $\mathcal B_v$,  we can choose $h_{m}$ so that
$\bY_{x_{v}}\nsubseteq h_{m}\bY_{x_{v}}= \bY_{h_{m}x_{v}}$.

Finally, let $\pi_{v}:G_c\to \bG^\circ_{v}(\K_{v})$ be the projection and set
$\pi_{v}^{n}:G_c^{n}\to\bG^\circ_{v}(\K_{v})^{n}$.
Then $(\pi_{v}^{n})^{-1}(Z_{v}(I,m;x))$ is a proper Zariski closed subset of $G_c^{n}$.
This handles the individual $(v,I,m,x)$ failure condition.
\end{proof}

\begin{thm}\label{thm:php}
Let $S$ be a finite set such that to each $v\in S$ is associated a local field $\K_{v}$ and a semisimple   $\K_{v}$-algebraic group $\bG_{v}$. Let $G=\prod_{v\in S}\bG_{v}(\K_{v})$ and let
$\Gamma<G$ be $S$-Zariski dense. Let $\mathcal B$ be the corresponding flag space associated to $G$ and $\Gamma$, as defined in (\ref{eq:boundary}).  If the action $\Gamma\actson \mathcal B$  is faithful, then it satisfies  condition $\mathrm{P}_{\mathrm{PHP}}$.
\end{thm}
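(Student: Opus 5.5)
We verify $\mathrm{P}_{\mathrm{PHP}}$ directly for $X=\mathcal B$. Fix a finite $F\subset\Gamma$ and $\epsilon>0$, and let $K=K(G)$ be the constant of Proposition~\ref{prop:large-intersections-general}. Choose $n$ so large that $\epsilon n^{1/2}>2K+1$. Since the action is faithful, $\ker(G\actson\mathcal B)\cap\Gamma=\{e\}$, so $F^{-1}F$ meets the kernel trivially and Proposition~\ref{prop:good-sequence} applies to $F$.

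By Lemma~\ref{lem:proximal-exists}(1) there is a $\Theta_\Gamma$-proximal $\gamma_0\in\Gamma_c$ with attracting/repelling pair $(x^+,x^-)$; by Lemma~\ref{lem:proximal-exists}(2), $\gamma_0^{-1}$ is proximal with pair $(x^-,x^+)$. Apply Proposition~\ref{prop:good-sequence} with $E=\{x^+,x^-\}$ to obtain a nonempty $S$-Zariski open $\Omega\subset G_c^n$. Since $\Gamma_c$ is $S$-Zariski dense in $G_c$ (it has finite index in $\Gamma$ and $G_c$ is connected), Lemma~\ref{lem:S-Zariski-density} shows that $\Gamma_c^n$ is $S$-Zariski dense in $G_c^n$. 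Hence we may pick $(h_1,\dots,h_n)\in\Omega\cap\Gamma_c^n$. Set $x_i^\pm:=h_ix^\pm$. Then $\eta_i:=h_i\gamma_0h_i^{-1}$ is $\Theta_\Gamma$-proximal with pair $(x_i^+,x_i^-)$. The points $ax_i^\pm$ ($a\in F\cup\{e\}$) are pairwise distinct, and each family $\{x_i^+\}_i$, $\{x_i^-\}_i$ is in general position.

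We take $\gamma_i:=\eta_i^{N}$ for a large $N$, together with small neighbourhoods: $C_i$ a compact neighbourhood of $x_i^+$ and $D_i$ a neighbourhood of $Y_{x_i^-}$ (a compact set), with $C_i\subset D_i$. For condition (1) we need the sets $aC_i$ and $a\gamma_i^{-1}(X\setminus D_i)$ to be pairwise disjoint. By Fact~\ref{fact:loxodromic}, $\gamma_i^{-1}=\eta_i^{-N}$ maps the compact set $X\setminus D_i$, which avoids $Y_{x_i^-}$, uniformly into any prescribed neighbourhood of $x_i^-$. So for $N$ large, the second family lies in small neighbourhoods of the points $ax_i^-$, while the first lies in small neighbourhoods of the points $ax_i^+$. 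These $2|F|n$ points are distinct, so shrinking the neighbourhoods gives disjointness.

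Condition (2) is the crux. Taking $D_i$ in a small neighbourhood $U_i$ of $Y_{x_i^-}$ and $C_i$ a small neighbourhood of $x_i^+$, the set $\gamma_i^{-1}(X\setminus C_i)$ lies in a small neighbourhood of $Y_{x_i^+}$ for $N$ large. To see this, use the $\gamma_i^{-1}$-dynamics: any point outside a neighbourhood $V$ of $Y_{x_i^+}$ is pushed by $\gamma_i$ into $C_i$ (uniform convergence on the compact set $X\setminus V$, Fact~\ref{fact:loxodromic}). Thus any subcollection of size $\ge\epsilon n^{1/2}>2K+1$ contains more than $K$ sets near $Y_{x_i^-}$, or more than $K$ sets near $Y_{x_i^+}$, indexed by distinct $i$.

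The main obstacle is upgrading the emptiness $\bigcap_{x\in E_0}Y_x=\emptyset$ of Proposition~\ref{prop:large-intersections-general} to emptiness of intersections of neighbourhoods. We do this by compactness. For each of the finitely many subsets $I$ of size $K+1$, the compact sets $Y_{x_i^\pm}$, $i\in I$, have empty intersection. We then choose $\delta$-neighbourhoods (in a fixed metric on the compact space $\mathcal B$) small enough that every such $(K+1)$-fold intersection of neighbourhoods is already empty. Because $n$ and the finite family are fixed before $\delta$ and $N$ are chosen, the order of choices is: first $n$, then $h_i$, then $\delta$, then $N$. This order is consistent, and it yields condition (2).
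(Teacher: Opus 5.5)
Your architecture is the paper's: a $\Theta_\Gamma$-proximal $\gamma_0$, generic conjugators from Proposition~\ref{prop:good-sequence} with $E=\{x^+,x^-\}$, high powers $\eta_i^N$, and neighbourhoods shrunk by compactness via Proposition~\ref{prop:large-intersections-general}. Your use of faithfulness for the hypothesis on $F^{-1}F$, your order of choices, and your compactness upgrade are all fine.

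\textbf{The gap.} The dynamics are applied with $+$ and $-$ interchanged, and with your choice of $D_i$ condition (1) is actually false. By Lemma~\ref{lem:proximal-exists}(2) and Fact~\ref{fact:loxodromic}:
\begin{itemize}
\item[(a)] $\gamma_i^{-1}=\eta_i^{-N}$ contracts compact subsets of $\mathcal B\setminus Y_{x_i^+}$ (not of $\mathcal B\setminus Y_{x_i^-}$) towards $x_i^-$;
\item[(b)] $\gamma_i=\eta_i^{N}$ pushes compact subsets of $\mathcal B\setminus Y_{x_i^-}$ into $C_i$.
\end{itemize}
Take $D_i$ to be a small neighbourhood of $Y_{x_i^-}\cup C_i$. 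Then $X\setminus D_i$ in general meets $Y_{x_i^+}$. That set is closed, $\eta_i$-invariant, and misses $x_i^-$, so $\gamma_i^{-1}(X\setminus D_i)$ never concentrates near $x_i^-$.

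Concretely, take $\mathcal B=\mathbb P^1(\R)\times\mathbb P^1(\R)$ (e.g.\ $G=\mathrm{PSL}_2(\R)^2$), where $Y_x=(\{x_1\}\times\mathbb P^1)\cup(\mathbb P^1\times\{x_2\})$. One computes $\gamma_i^{-1}(X\setminus D_i)\supset A_i\times(\mathbb P^1\setminus T_i)$, where $A_i$ is a thin annulus around the first coordinate of $x_i^-$ and $T_i$ is a tiny neighbourhood of its second coordinate. Symmetrically, $\gamma_j^{-1}(X\setminus D_j)\supset(\mathbb P^1\setminus T'_j)\times A'_j$. For $i\neq j$ these slabs meet for all large $N$, so $a\gamma_i^{-1}(X\setminus D_i)\cap a\gamma_j^{-1}(X\setminus D_j)\neq\emptyset$ and (1) fails. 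Only when $Y_x=\{x\}$, as for a single rank-one factor, do the two versions happen to agree.

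The same swap occurs in (2). The set $\gamma_i^{-1}(X\setminus C_i)$ lies near $Y_{x_i^-}$, not near $Y_{x_i^+}$; indeed it contains $x_i^-\notin Y_{x_i^+}$. Moreover, $D_i$ cannot lie in a small neighbourhood of $Y_{x_i^-}$ while containing $C_i$, since $x_i^+\notin Y_{x_i^-}$.

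\textbf{The fix.} Swap the roles: let $D_i$ be a small neighbourhood of $Y_{x_i^+}$ containing $C_i$. This is legitimate because $x_i^+\in Y_{x_i^+}$.
\begin{itemize}
\item[(1)] $X\setminus D_i$ is now a compact subset of $\mathcal B\setminus Y_{x_i^+}$. By (a), $\gamma_i^{-1}(X\setminus D_i)$ lies in a small neighbourhood of $x_i^-$. Condition (1) then follows from the distinctness of the points $ax_i^\pm$.
\item[(2)] For a small neighbourhood $V$ of $Y_{x_i^-}$, (b) gives $\gamma_i(X\setminus V)\subset C_i$, i.e.\ $\gamma_i^{-1}(X\setminus C_i)\subset V$. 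So each $D_i$ lies near $Y_{x_i^+}$ and each $\gamma_i^{-1}(X\setminus C_i)$ lies near $Y_{x_i^-}$. Your pigeonhole and compactness argument with more than $2K$ members now goes through.
\end{itemize}
This is exactly the paper's choice: $C_i=U_i^+$ and $D_i=V_i^+$, with $\gamma_i(\mathcal B\setminus V_i^-)\subset U_i^+$ and $\gamma_i^{-1}(\mathcal B\setminus V_i^+)\subset U_i^-$.
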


\begin{proof}
By Lemma~\ref{lem:proximal-exists}, we may fix an element $\gamma_0\in \Gamma$ such that both $\gamma_0$ and $\gamma_0^{-1}$ are $\Theta_{\Gamma}$-proximal
with attracting/repelling fixed points $(x^{+},x^{-})\in\mathcal{B}\times\mathcal{B}$.

Let $F\subset\Gamma$ be finite and let $\epsilon>0$.
Let $K=K(G)$ be the constant from Proposition~\ref{prop:large-intersections-general}.
Choose $n\in\N$ so that $\epsilon n^{1/2}>2K$.

Since $G_c$ has finite index in $G$ and $\Gamma<G$ is $S$-Zariski dense, we have that  $\Gamma_c:=\Gamma\cap G_c$ is $S$-Zariski dense in $G_c$.
The open set $\Omega\subset G_c^n$ given by Proposition~\ref{prop:good-sequence} is non-empty and $S$-Zariski open, and therefore meets the $S$-Zariski dense subset $\Gamma_c^n$. Thus we may find elements $s_{1},...,s_{n}\in\Gamma_c$
satisfying both conditions of Proposition~\ref{prop:good-sequence} with respect to the finite set
$E=\{x^{+},x^{-}\}\subset\mathcal{B}$.
Set $x_{i}^{\bullet}:=s_{i}x^{\bullet}$ for $\bullet\in\{+,-\}$.
Then the points $ax_{i}^{\bullet}$, for $a\in F\cup\{e\}$, $1\leq i\leq n$ and $\bullet\in\{+,-\}$,
are all distinct, and the sets $\{x_{1}^{+},...,x_{n}^{+}\}$ and $\{x_{1}^{-},...,x_{n}^{-}\}$ are
in general position.
In particular,
\begin{equation}\label{eq:empty-locus}
\bigcap_{i\in I}Y_{x_{i}^{+}}=\bigcap_{i\in I}Y_{x_{i}^{-}}=\emptyset
\qquad\text{for all }I\subset\{1,...,n\}\text{ with }|I|>K.
\end{equation}
where, once again, $K=K(G)$ is the constant provided by Proposition \ref{prop:large-intersections-general}.

Since each $\bB_{v}$ is projective, $\mathcal{B}_{v}=\bB_{v}(\K_{v})$ is compact, hence so is
$\mathcal{B}=\prod_{v\in S}\mathcal{B}_{v}$.
For each $i$ choose an open neighborhood $U_{i}^{\bullet}\subset\mathcal{B}$ of $x_{i}^{\bullet}$ and an open neighborhood $V_{i}^{\bullet}\subset\mathcal{B}$ of the closed set $Y_{x_{i}^{\bullet}}$ containing $U_i^\bullet$. These sets should be chosen sufficiently small so that:
\begin{enumerate}
\item the disjointness of the points $ax_{i}^{\bullet}$ implies that the sets $aU_{i}^{\bullet}$
(for $a\in F\cup\{e\}$, $1\le i\le n$, $\bullet\in\{+,-\}$) are pairwise disjoint;
\item for every $I\subset\{1,...,n\}$ with $|I|>K$ one has
$\bigcap_{i\in I}V_{i}^{+}=\bigcap_{i\in I}V_{i}^{-}=\emptyset$. In particular, the intersection of any subcollection of $\{V_i^+,V_i^-\}_{i=1}^n$ of size $>2K$ is empty.
\end{enumerate}
For each $i$, the element $s_{i}\gamma_{0}s_{i}^{-1}$ is $\Theta_{\Gamma}$-proximal with
attracting/repelling fixed points $(x_{i}^{+},x_{i}^{-})$.
By Fact~\ref{fact:loxodromic} applied to $s_{i}\gamma_{0}s_{i}^{-1}$ and its inverse, we may choose
$m_{i}\in\N$ sufficiently large so that
\[
\gamma_{i}:=s_{i}\gamma_{0}^{m_{i}}s_{i}^{-1}
\]
satisfies
\[
\gamma_{i}\bigl(\mathcal{B}\setminus V_{i}^{-}\bigr)\subset U_{i}^{+}
\qquad\text{and}\qquad
\gamma_{i}^{-1}\bigl(\mathcal{B}\setminus V_{i}^{+}\bigr)\subset U_{i}^{-}.
\]
Equivalently,
\[
\gamma_{i}^{-1}\bigl(\mathcal{B}\setminus U_{i}^{+}\bigr)\subset V_{i}^{-}
\qquad\text{and}\qquad
\gamma_{i}\bigl(\mathcal{B}\setminus U_{i}^{-}\bigr)\subset V_{i}^{+}.
\]
Condition $\mathrm{P}_{\mathrm{PHP}}$ thus holds with $C_i=U_i^+$ and $D_i=V_i^+$.
\end{proof}

The generality of Theorem~\ref{thm:php} is essential for deducing $\mathrm P_{\mathrm{PHP}}$ for arbitrary linear groups with trivial amenable radical. As a special case, we get one of the theorems from the introduction.

\begin{proof}[Proof of Theorem \ref{thm:s-adic}]
     $\Gamma$ is $S$-Zariski dense by Lemma \ref{lem:S-Zariski-density}, and its projection into each simple factor $\bG(\K_v)$ is unbounded by assumption. It follows from Lemma \ref{lem:proximal-exists} that the $G$-action on $\mathcal B$ is faithful, and particularly the $\Gamma$-action. Theorem \ref{thm:php} thus applies, and so $\Gamma$ has property $\mathrm{P}_{\mathrm{PHP}}$. The corresponding result about twisted reduced group $C^*$-algebra stated in Theorem \ref{thm: twisted} follows from \cite[Theorem A]{flores2026pureness}.
\end{proof}

\section{Linear groups\label{sec:Linear-groups}}

The purpose of this section is to prove Theorem~\ref{thm:main} by reducing it to
Theorem~\ref{thm:php}.

We begin by recalling that torsion in linear groups is bounded, provided that the
field is finitely generated. The finite-generation assumption is necessary: for
example, the multiplicative group of the field \(\bar{\Q}\) contains roots of
unity of arbitrarily large order, and therefore has unbounded torsion.

\begin{lem}\label{lem:torsion}
Let $k$ be a finitely generated field, and let $d\in\N$.
There exists $m=m(k,d)$ such that every element $g\in\GL_{d}(k)$ of finite order
satisfies $g^{m}=1$.
\end{lem}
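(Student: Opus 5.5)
The plan is to bound the possible orders of finite-order elements of $\GL_{d}(k)$ by bounding the orders of their eigenvalues. Then one takes $m$ to be a suitable common multiple.

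First, let $g\in\GL_d(k)$ with $g^N=1$. The eigenvalues of $g$ lie in $\bar k$ and are roots of unity. Each is a root of the characteristic polynomial of $g$, which has degree $d$ and coefficients in $k$, so each eigenvalue $\zeta$ satisfies $[k(\zeta):k]\le d$. The key claim is that a finitely generated field $k$ contains only finitely many roots of unity in extensions of bounded degree $\le d$. Equivalently, the orders of roots of unity $\zeta$ with $[k(\zeta):k]\le d$ are bounded by some $M=M(k,d)$.

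Second, I prove this claim by cases on the characteristic. In characteristic $0$, let $\bar{\Q}\cap k=:k_0$. This is a number field, because the algebraic closure of $\Q$ inside a finitely generated field is finite over $\Q$. Since $\Q(\zeta)/\Q$ is Galois, $[k(\zeta):k]=[k_0(\zeta):k_0]$: the extension $\Q(\zeta)$ is linearly disjoint from the purely transcendental part. Hence $[\Q(\zeta):\Q]\le d[k_0:\Q]$, that is, $\varphi(\mathrm{ord}\,\zeta)\le d[k_0:\Q]$. Since $\varphi(n)\to\infty$, the order is bounded. In characteristic $p$, the same reasoning applies with $k_0=\bar{\F}_p\cap k$, which is a finite field $\F_q$. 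Then $\zeta$ lies in $\F_{q^{d!}}$ or at least in $\F_{q^j}$ with $j\le d$, so its order divides $\prod_{j\le d}(q^j-1)$. The main obstacle is this step, namely justifying that the relative algebraic closure of the prime field in $k$ is finite and that degrees over $k$ equal degrees over $k_0$. I would cite standard field theory here: $k$ is a finite extension of a purely transcendental $k'(t_1,\dots,t_r)$, and the prime field is algebraically closed in $k'(t_1,\dots,t_r)$.

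Third, I handle the unipotent part. In characteristic $0$, $g$ is diagonalizable over $\bar k$ since its minimal polynomial divides $x^N-1$, which is separable. So $g^{M!}=1$, and $m=M!$ works. In characteristic $p$, write $N=p^a N'$ with $p\nmid N'$. Then $g^{N'}$ is unipotent, so $(g^{N'}-1)^{d}=0$. Choosing $p^b\ge d$ gives $g^{N'p^b}=1$. The semisimple part has order dividing $N'$, which divides $M!$ by the eigenvalue bound: $g^{p^a}$ is semisimple with the same eigenvalue orders. Hence $m=M!\,p^b$ works.
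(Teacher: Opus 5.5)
Your proposal is correct and follows essentially the same route as the paper: you bound eigenvalue orders using the finiteness of roots of unity of degree $\le d$ over $k$, and you kill the unipotent part with $p^{b}\ge d$. The differences are minor: you prove the roots-of-unity finiteness yourself (relative algebraic closure of the prime field plus a Galois degree argument) where the paper cites Tits' Lemma~2.3, and you split $N=p^aN'$ instead of using the Jordan decomposition $g=su$; in characteristic $p$, take $N$ to be the exact order of $g$, so that $N'$ is the order of the semisimple element $g^{p^a}$ and really divides $M!$.
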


\begin{proof}
We first treat unipotent torsion. Let $u\in\GL_d(k)$ be unipotent and torsion.
If $\mathrm{char}(k)=0$ then $u=1$. If $\mathrm{char}(k)=p>0$, write $u=1+N$ with $N$
nilpotent and $N^{d}=0$. Choose $e\in\N$ such that $p^{e}\ge d$. Since
$\binom{p^{e}}{j}\equiv 0\pmod p$ for $0<j<p^{e}$, we have
\[
u^{p^{e}}=(1+N)^{p^{e}}=1+N^{p^{e}}=1,
\]
because $N^{p^{e}}=0$. Thus there is an integer $q=q(d,k)$ such that every unipotent
torsion element satisfies $u^{q}=1$ (one may take $q=1$ in characteristic $0$, and
$q=p^{e}$ in characteristic $p>0$ with $p^{e}\ge d$).

Next, by \cite[Lemma~2.3]{tits1972free} there are only finitely many roots of unity
$x\in\bar{k}$ that are algebraic over $k$ of degree $\le d$. Let $r=r(k,d)$ be the least
common multiple of their orders, so that every such $x$ satisfies $x^{r}=1$.

Let $g\in\GL_d(k)$ be any torsion element. Consider $g$ in $\GL_d(\bar{k})$ and write
its multiplicative Jordan decomposition $g=su$, where $s$ is semisimple, $u$ is unipotent,
and $su=us$. Then $u$ is torsion, hence $u^{q}=1$. The eigenvalues of $s$ coincide with
the eigenvalues of $g$. Hence each
eigenvalue $\lambda$ of $s$ is a root of unity and is algebraic over $k$ of degree $\le d$,
so $\lambda^{r}=1$. Therefore $s^{r}=1$.

With $m:=rq$ we get
\[
g^{m}=(su)^{rq}=s^{rq}u^{rq}=1,
\]
as required.
\end{proof}

\begin{lem}\label{lem:zariski-semisimple-implies-trivial-ar}
Let $k$ be an arbitrary field. Let  $\Gamma\le\GL_{d}(k)$ be a finitely generated subgroup, and let $\bG:=\overline{\Gamma}^{\,z}$
be its Zariski closure.
Assume that $\bG^\circ$ is semisimple and that 
\[
\Gamma\cap C_{\bG}(\bG^\circ)=\{e\}
\]
where $C_{\bG}(\bG^\circ)$ is the centralizer of $\bG^\circ$ in $\bG$.
Then $\Gamma$ has trivial amenable radical.
\end{lem}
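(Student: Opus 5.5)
Let $N\lhd\Gamma$ be an amenable normal subgroup; the plan is to show $N\le C_{\bG}(\bG^\circ)$, so that $N\le\Gamma\cap C_{\bG}(\bG^\circ)=\{e\}$. Let $\bN:=\overline{N}^{\,z}$ be its Zariski closure. Since $\Gamma$ normalizes $N$, it normalizes $\bN$, and normalizing a closed subgroup is a Zariski-closed condition. Hence all of $\bG$ normalizes $\bN$, and $\bN^\circ$ is a connected normal subgroup of $\bG^\circ$.

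The first step is to show that $\bN^\circ$ is solvable. This is the main obstacle, because an amenable linear group need not be virtually solvable when the field is not finitely generated. Here the finite generation of $\Gamma$ is used: $\Gamma$ lies in $\GL_d(k_0)$ for the finitely generated subfield $k_0$ generated by its matrix entries. By the Tits alternative over $k_0$, the subgroup $N$ either contains a nonabelian free subgroup or is virtually solvable. The former is excluded by amenability, so $N$ is virtually solvable. Then $\bN^\circ$ is the Zariski closure of a finite-index solvable subgroup, hence solvable. If one wants to avoid the Tits alternative for infinitely generated $N$, the statement of the Tits alternative for arbitrary subgroups of $\GL_d$ over any field suffices. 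Lemma~\ref{lem:torsion} is presumably available for controlling torsion, but it is not needed for this step.

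Next, since $\bG^\circ$ is semisimple, its only connected solvable normal subgroup is trivial, so $\bN^\circ=\{e\}$. Therefore $\bN$ is finite, and so is $N$. A finite normal subgroup $N$ of $\Gamma$ is normalized by $\Gamma$; as in the first paragraph, it is then normalized by all of $\bG$, so $N\lhd\bG$.

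Finally, consider the conjugation action of the connected group $\bG^\circ$ on the finite set $N$. Each orbit is the image of the connected group $\bG^\circ$ and is finite, hence a single point. So $\bG^\circ$ centralizes $N$, which gives $N\le C_{\bG}(\bG^\circ)$, and the hypothesis then yields $N=\{e\}$. Thus the amenable radical of $\Gamma$, being the largest amenable normal subgroup, is trivial.
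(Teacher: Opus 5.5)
\textbf{There is a genuine gap, in positive characteristic.} Your overall architecture matches the paper's. You take the Zariski closure of the normal subgroup, note that it is normalized by $\bG$, show its identity component is a connected normal subgroup of the semisimple group $\bG^\circ$ and kill it, and then observe that a finite normal subgroup is centralized by $\bG^\circ$. Your last two paragraphs are correct, and in characteristic $0$ the whole argument is complete, since Tits' theorem gives virtual solvability for arbitrary subgroups there.

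The gap is in the step you yourself flag as the main obstacle: that the amenable subgroup $N$ is virtually solvable.

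\begin{itemize}
\item In characteristic $p>0$, the Tits alternative for a subgroup that is not finitely generated only yields a solvable normal subgroup $R\lhd N$ with $N/R$ locally finite. It does not give virtual solvability.
\item Your fallback sentence, that the Tits alternative for arbitrary subgroups over any field suffices, is false. The group $\mathrm{PGL}_2(\overline{\F_p})$ is locally finite, hence amenable with no free subgroups, but it is not virtually solvable.
\item In fact $\mathrm{PGL}_2(\overline{\F_p})$ satisfies every hypothesis of the lemma except finite generation: its Zariski closure is $\mathrm{PGL}_2$ and $C_{\bG}(\bG^\circ)$ is trivial. Yet its amenable radical is the whole group. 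So any correct proof must use the finite generation of the field $k_0$ in an essential way.
\item You only name $k_0$. The claim ``virtually solvable by the Tits alternative over $k_0$'' is exactly the nontrivial point, and your remark that Lemma~\ref{lem:torsion} is not needed gets it backwards: that lemma is what makes the finitely generated field matter.
\end{itemize}

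\textbf{How to close it (the paper's argument).}
\begin{enumerate}
\item Since $\bN^\circ$ is a connected normal subgroup of $\bG^\circ$, it is semisimple.
\item $R\lhd N$, so $\overline{R}^{\,z}$ is normalized by $\bN$. Hence $(\overline{R}^{\,z})^\circ$ is a connected solvable normal subgroup of $\bN^\circ$, so it is trivial.
\item Therefore $R$ is finite and $N$ is locally finite.
\item Since $N\le \GL_d(k_0)$ with $k_0$ finitely generated, Lemma~\ref{lem:torsion} gives a uniform $m$ with $a^m=e$ for all $a\in N$.
\item This polynomial identity passes to the Zariski closure $\bN$.
\item A nontrivial connected semisimple group contains a copy of $\mathrm{GL}_1$ over $\bar{k}$, on which $x^m=1$ fails. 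Hence $\bN^\circ$ is trivial and $N$ is finite.
\end{enumerate}
From there your final two paragraphs finish the proof unchanged.
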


\begin{proof}
Let $A:=\Rad(\Gamma)$ and let $\bH:=\overline{A}^{\,z}\le\bG$ be its Zariski closure.
Since $A$ is normal in $\Gamma$ and $\Gamma$ is Zariski dense in $\bG$, the subgroup $\bH$ is normal
in $\bG$. In particular, $\bH^\circ$ is a connected normal algebraic subgroup of the semisimple group
$\bG^\circ$, and hence $\bH^\circ$ is semisimple.

Since $\Gamma$ is finitely generated, all elements of $\Gamma$ lie in $\GL_d(k_0)$ for some finitely
generated subfield $k_0\subset k$.
As $A$ is amenable, it contains no nonabelian free subgroup. By \cite[Theorems~1 and~2]{tits1972free},
there exists a normal solvable subgroup $R\triangleleft A$ such that $A/R$ is locally finite.
Let $\overline R^{\,z}$ be the Zariski closure of $R$ in $\bH$.
Since $R\triangleleft A$ and $A$ is Zariski dense in $\bH$, the subgroup $\overline R^{\,z}$ is normal
in $\bH$. Thus $(\overline R^{\,z})^\circ$ is a connected solvable normal algebraic subgroup of the
semisimple group $\bH^\circ$, and hence is trivial. Therefore $\overline R^{\,z}$ is finite, and in
particular $R$ is finite.

It follows that $A$ is locally finite. Indeed, $A/R$ is locally finite and $R$ is finite.
By Lemma~\ref{lem:torsion}, there exists $m>0$ such that $a^{m}=e$ for all $a\in A$.
This polynomial equation must therefore hold for all elements in the Zariski closure $\bH$.
If $\bH^\circ$ were nontrivial, then, after passing to the algebraic closure $\bar{k}$, the group $\bH^\circ$ would contain a positive-dimensional torus $\mathrm{GL}_1\leq \bH^\circ$. It would then follow that $x^m=1$ for all $x\in\bar{k}^*$, which is of course impossible. Thus $\bH^\circ$ is trivial, so $\bH$ is finite.

Finally, since $\bH$ is a finite normal algebraic subgroup of $\bG$, the connected group $\bG^\circ$
acts trivially on $\bH$ by conjugation. Hence $\bH\le C_{\bG}(\bG^\circ)$. Therefore
\[
A\le \Gamma\cap C_{\bG}(\bG^\circ)=\{e\}.
\]
Thus $A=\{e\}$.
\end{proof}

\begin{lem}\label{lem:amenable-radical}
Let $\Gamma$ be a linear group with trivial amenable radical.
Then $\Gamma$ is a directed union of finitely generated subgroups with trivial amenable radicals.
\end{lem}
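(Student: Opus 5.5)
The plan is to use Lemma~\ref{lem:zariski-semisimple-implies-trivial-ar}. Fix a faithful embedding $\Gamma\le\GL_d(k)$ and let $\bG$ be the Zariski closure of $\Gamma$. Let $\Rad(\Gamma)=\{e\}$. Since $\Gamma$ is the directed union of its finitely generated subgroups, it suffices to find a finitely generated subgroup $\Lambda_0\le\Gamma$ such that every finitely generated $\Lambda$ with $\Lambda_0\le\Lambda\le\Gamma$ has trivial amenable radical. These $\Lambda$ still form a directed family whose union is $\Gamma$.

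The first step is to show that $\bG^\circ$ is semisimple and that $\Gamma\cap C_{\bG}(\bG^\circ)=\{e\}$.
\begin{enumerate}
\item Let $\bR$ be the solvable radical of $\bG^\circ$. It is characteristic in $\bG^\circ$, hence normal in $\bG$.
\item Then $\Gamma\cap\bR$ is a solvable normal subgroup of $\Gamma$, so it is trivial.
\item I do not claim this forces $\bR=\{e\}$ directly: $\Gamma$ could meet $\bR$ trivially while $\bR$ is large. Instead, I would argue with the action of $\Gamma$ on $\bR$. This is the delicate point.
\item A cleaner route: the normal subgroup $\Gamma\cap C_{\bG}(\bG^\circ)$ of $\Gamma$ has abelian-by-finite image considerations. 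More directly, $\Gamma\cap C_{\bG}(\bG^\circ)\cap\bG^\circ$ is central in $\bG^\circ$, hence abelian and normal in $\Gamma$, hence trivial. Therefore $\Gamma\cap C_{\bG}(\bG^\circ)$ embeds in the finite group $\bG/\bG^\circ$, so it is finite and normal, hence trivial.
\item For semisimplicity, replace $\bG$ by $\bG/\bR$ via the homomorphism $\Gamma\to(\bG/\bR)(\bar k)$. Its kernel $\Gamma\cap\bR$ is trivial, so $\Gamma$ embeds Zariski densely in $\bG/\bR$, which has semisimple identity component.
\item The centralizer condition then has to be rechecked for the new embedding by the same argument as in item 4. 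That argument used only that the relevant intersection is central in the identity component and has finite image in the component group, so it goes through unchanged.
\end{enumerate}

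The second step passes to a finitely generated subgroup. By Noetherianity there is a finitely generated $\Lambda_0\le\Gamma$ whose Zariski closure equals $\bG$. Every $\Lambda$ with $\Lambda_0\le\Lambda\le\Gamma$ then also has Zariski closure $\bG$. So $\bG^\circ$ is semisimple, and $\Lambda\cap C_{\bG}(\bG^\circ)\subseteq\Gamma\cap C_{\bG}(\bG^\circ)=\{e\}$. Lemma~\ref{lem:zariski-semisimple-implies-trivial-ar} then gives $\Rad(\Lambda)=\{e\}$, which is what we need.

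The main obstacle is the reduction to $\bG^\circ$ semisimple, since the radical $\bR$ cannot simply be discarded inside a single embedding. I expect to handle it by changing the linear representation to $\bG/\bR$, realised in some $\GL_{d'}(\bar k)$ via Chevalley's theorem. This is harmless: the statement only needs $\Gamma$ to be linear, and faithfulness of the new representation comes from $\Gamma\cap\bR=\{e\}$.
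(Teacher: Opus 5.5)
\textbf{There is a genuine gap in your Step 2.} Your Step 1 is correct and matches the paper's reduction to the case $\bG^\circ$ semisimple. Step 2, however, rests on a false claim: ``by Noetherianity there is a finitely generated $\Lambda_0\le\Gamma$ whose Zariski closure equals $\bG$.''

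Noetherianity gives the \emph{descending} chain condition on Zariski closed sets, not the ascending one. The closures $\overline{\Lambda}^{\,z}$ of finitely generated subgroups form an increasing directed family. Only their identity components are forced to stabilize (by dimension), while the number of components can grow forever.

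This failure occurs even under your hypotheses. Let $k=\overline{\F_p(t)}$ and let $\Phi$ be a free group of countably infinite rank.
\begin{enumerate}
\item Take an injective $\rho_1\colon\Phi\to\mathrm{PGL}_2(\F_p(t))$ with Zariski dense image. For instance, land inside a nonabelian free subgroup of $\mathrm{PGL}_2(\F_p[t])$; such a subgroup is Zariski dense since proper algebraic subgroups of $\mathrm{PGL}_2$ are virtually solvable.
\item Take a surjection $\rho_2\colon\Phi\to\mathrm{PGL}_2(\bar{\F}_p)$, and set $\Gamma=\{(\rho_1(w),\rho_2(w))\}\cong\Phi$. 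It has trivial amenable radical.
\item Its Zariski closure is $\bG=\mathrm{PGL}_2\times\mathrm{PGL}_2$. Otherwise simplicity of $\mathrm{PGL}_2$ forces the closure to meet $\mathrm{PGL}_2\times\{1\}$ trivially, giving $\ker\rho_2\subseteq\ker\rho_1=\{e\}$. That is impossible, since $\Phi$ is torsion-free and $\mathrm{PGL}_2(\bar{\F}_p)$ is torsion.
\item So $\bG$ is connected adjoint and $\Gamma\cap C_{\bG}(\bG^\circ)=\{e\}$.
\item Yet every finitely generated $\Lambda\le\Gamma$ has finite second projection, so $\overline{\Lambda}^{\,z}\neq\bG$.
\end{enumerate}
In characteristic zero your claim happens to be true, via Jordan's theorem on finite linear groups. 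That is not the reason you give, and it fails in positive characteristic as above.

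\textbf{The missing idea, as in the paper.} Work with the stabilized identity component $\bH:=(\overline{\Gamma_E}^{\,z})^\circ$ for $E\supseteq E_0$ instead of with $\bG^\circ$. One checks three things:
\begin{enumerate}
\item $\bH\trianglelefteq\bG$.
\item $\bH$ is semisimple, being connected and normal in $\bG^\circ$.
\item The image $\pi(\Gamma)$ in $\bG/\bH$ is locally finite.
\end{enumerate}
Applying Lemma~\ref{lem:zariski-semisimple-implies-trivial-ar} to $\Gamma_E$, whose closure has identity component $\bH$, requires $\Gamma_E\cap C_{\bH_E}(\bH)=\{e\}$. This is strictly stronger than your $\Gamma\cap C_{\bG}(\bG^\circ)=\{e\}$, because $C_{\bG}(\bH)$ can be much larger than $C_{\bG}(\bG^\circ)$. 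In the example above it is the whole second factor.

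The paper proves $C:=\Gamma\cap C_{\bG}(\bH)=\{e\}$ as follows. First, $C$ is normal in $\Gamma$. Next, $C\cap\bH=\Gamma\cap Z(\bH)$ is a finite normal subgroup of $\Gamma$, hence trivial. So $C$ injects into the locally finite group $\pi(\Gamma)$, which makes $C$ an amenable normal subgroup of $\Gamma$, hence trivial.
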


\begin{proof}
Fix an embedding $\Gamma\leq\mathrm{GL}_{d}(K)$ for some algebraically closed field $K$, and let
$\bG$ denote the Zariski closure of $\Gamma$.
Let $\bR$ be the solvable radical of $\bG^\circ$.
Since $\bR$ is normal in $\bG$, the group $\Gamma\cap\bR$ is a normal amenable subgroup of $\Gamma$.
Thus $\Gamma\cap\bR=\{e\}$. After quotienting by $\bR$, we may therefore assume that $\bG$ is
semisimple.

Write $\Gamma=\bigcup_E \Gamma_E$, where $\Gamma_E$ is the finitely
generated subgroup generated by a finite set $E\subset\Gamma$. Let
$\bH_E\leq\bG$ denote the Zariski closure of $\Gamma_E$ in $\bG$.
Let $\bH_E^\circ$ denote the connected component of the identity,
and consider the upward directed diagram of connected algebraic subgroups
$\bH_E^\circ$. Since an increasing chain of connected algebraic subgroups must have strictly increasing dimensions until it stabilizes, there exist an algebraic subgroup $\bH\leq\bG$ and a finite set $E_0\subset\Gamma$ such that $\bH_E^\circ=\bH$ for all finite $E\supset E_0$.

For every finite $E\supset E_0$, the subgroup $\bH=\bH_E^\circ$ is normal
in $\bH_E$. Thus the equation $g\bH g^{-1}=\bH$ holds for every
$g\in\bigcup_{E\supset E_0}\bH_E$, and in particular for every $g\in\Gamma$.
By Zariski density, it therefore holds for every $g\in\bG$. We thus see that
$\bH\leq\bG$ is normal, and denote the quotient map by
$\pi:\bG\to\bG/\bH$. With this notation, we note that
$\pi(\Gamma_E)\leq \pi(\bH_E)=\bH_E/\bH$ is finite for every finite
$E\supset E_0$. Therefore
$\pi(\Gamma)=\bigcup_{E\supset E_0}\pi(\Gamma_E)$ is locally finite, and in
particular amenable.

Consider
\[
C:=\Gamma\cap C_{\bG}(\bH).
\]
Since $\bH$ is normal in $\bG$, the algebraic subgroup $C_{\bG}(\bH)$ is normal in $\bG$, and therefore
$C$ is normal in $\Gamma$. Moreover, the restriction of $\pi$ to $C$ is injective. Indeed,
\[
\ker(\pi|_C)=C\cap \bH=\Gamma\cap Z(\bH).
\]
Since $\bH$ is connected semisimple, $Z(\bH)$ is finite; since $\bH$ is normal in $\bG$, 
$Z(\bH)$ is normal in $\bG$. Hence $\Gamma\cap Z(\bH)$ is a finite normal subgroup of $\Gamma$, and is
therefore trivial. Thus $C$ embeds into the locally finite group $\pi(\Gamma)$, and hence $C$ is
amenable. We get that $C$ is a normal amenable subgroup of $\Gamma$, and is therefore trivial.
Hence
\[
\Gamma_E\cap C_{\bH_E}(\bH_E^\circ)
\subseteq
\Gamma\cap C_{\bG}(\bH)
=C=
\{e\}.
\]

Lemma~\ref{lem:zariski-semisimple-implies-trivial-ar}, applied to the finitely generated group
$\Gamma_E$ and its Zariski closure $\bH_E$, shows that $\Gamma_E$ has trivial amenable radical for all finite $E\supset E_0$.
We conclude that the subgroups $\{\Gamma_E\}_{E\supset E_0}$ form an upward directed family of finitely generated subgroups with trivial amenable radicals, and their  union is
$\Gamma$. This proves the lemma.
\end{proof}

We need one more fact.
\begin{lem}\label{lem:directed-union-Cr-twisted}
Let $\{\Gamma_i\}_{i\in I}$ be an upward directed family of groups and set
$\Gamma=\bigcup_{i\in I}\Gamma_i$.
Let $\omega\in Z^2(\Gamma,\T)$ be a normalized $2$-cocycle, and write
$\omega_i=\omega|_{\Gamma_i\times \Gamma_i}$.
Then the inclusions $\Gamma_i\le \Gamma$ induce canonical embeddings
$
C_r^*(\Gamma_i,\omega_i)\ \hookrightarrow\ C_r^*(\Gamma,\omega)
$
such that
\[
C_r^*(\Gamma,\omega)\ =\ \overline{\bigcup_{i\in I} C_r^*(\Gamma_i,\omega_i)}.
\]
In particular, for $\omega=1$ we have
\[
C_r^*(\Gamma)\ =\ \overline{\bigcup_{i\in I} C_r^*(\Gamma_i)}.
\]
\end{lem}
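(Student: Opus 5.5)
The plan is to realize everything concretely on $\ell^2(\Gamma)$ and then compare norms. For $\Gamma_i\le\Gamma$ write $\lambda_\omega$ for the $\omega$-twisted left regular representation of $\Gamma$ on $\ell^2(\Gamma)$, given by $\lambda_\omega(g)\delta_h=\omega(g,h)\delta_{gh}$. Likewise write $\lambda_{\omega_i}$ for that of $\Gamma_i$ on $\ell^2(\Gamma_i)$. The canonical map sends $\lambda_{\omega_i}(g)\mapsto\lambda_\omega(g)$ for $g\in\Gamma_i$. Since $\omega_i$ is the restriction of $\omega$, this respects the twisted convolution on $\C[\Gamma_i,\omega_i]\subset\C[\Gamma,\omega]$, so it is a $*$-homomorphism at the algebraic level. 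It therefore suffices to show that it is isometric for the reduced norms. Then it extends to an injective $*$-homomorphism $C_r^*(\Gamma_i,\omega_i)\hookrightarrow C_r^*(\Gamma,\omega)$.

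For isometry I will decompose $\ell^2(\Gamma)=\bigoplus_{t\in \Gamma_i\backslash\Gamma}\ell^2(\Gamma_i t)$ along right cosets. Each summand is invariant under $\lambda_\omega(\Gamma_i)$. Fix a coset representative $t$. The unitary $U_t:\ell^2(\Gamma_i)\to\ell^2(\Gamma_i t)$, given by $\delta_h\mapsto \mu_t(h)\delta_{ht}$ for a suitable scalar function $\mu_t:\Gamma_i\to\T$, should intertwine $\lambda_{\omega_i}$ with $\lambda_\omega|_{\Gamma_i}$ restricted to $\ell^2(\Gamma_i t)$. The required identity is $\omega(g,ht)\mu_t(h)=\omega(g,h)\mu_t(gh)$. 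The cocycle identity $\omega(g,h)\omega(gh,t)=\omega(g,ht)\omega(h,t)$ shows that $\mu_t(h):=\overline{\omega(h,t)}$ works.

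Hence $\lambda_\omega|_{\Gamma_i}$ is a multiple of $\lambda_{\omega_i}$, so for $x\in\C[\Gamma_i,\omega_i]$ we get $\|\lambda_\omega(x)\|=\|\lambda_{\omega_i}(x)\|$. This cocycle bookkeeping is the only point needing care. I expect it to be the main, though mild, obstacle: one has to get the sign and conjugation conventions for $\omega$ right, and the normalization $\omega(e,\cdot)=\omega(\cdot,e)=1$ keeps these consistent.

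Finally, the density statement. The family $\{\Gamma_i\}$ is directed with union $\Gamma$, so $\C[\Gamma,\omega]=\bigcup_i\C[\Gamma_i,\omega_i]$, since each finitely supported element lies in some $\Gamma_i$. Moreover $\C[\Gamma,\omega]$ is norm dense in $C_r^*(\Gamma,\omega)$ by definition. Therefore the closure of the increasing union of the images of $C_r^*(\Gamma_i,\omega_i)$ is all of $C_r^*(\Gamma,\omega)$. The case $\omega=1$ is the special case $\mu_t\equiv1$.
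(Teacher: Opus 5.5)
Your proof is correct apart from one slip, and it is essentially the paper's argument. The paper treats the embedding $C_r^*(\Gamma_i,\omega_i)\hookrightarrow C_r^*(\Gamma,\omega)$ as standard, citing the literature, and then takes closures of $\C[\Gamma]=\bigcup_i\C[\Gamma_i]$, exactly as in your last paragraph. You additionally prove the embedding yourself, via the right-coset decomposition of $\ell^2(\Gamma)$, which makes the norm equality explicit.

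The slip concerns the phase. With your conventions $\lambda_\omega(g)\delta_h=\omega(g,h)\delta_{gh}$ and $U_t\delta_h=\mu_t(h)\delta_{ht}$, the required identity $\omega(g,ht)\mu_t(h)=\omega(g,h)\mu_t(gh)$ is solved by $\mu_t(h)=\omega(h,t)$, not by $\overline{\omega(h,t)}$. With that choice it becomes literally the cocycle identity $\omega(g,h)\omega(gh,t)=\omega(g,ht)\omega(h,t)$.

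The conjugate choice would instead require $\omega(g,ht)\omega(gh,t)=\omega(g,h)\omega(h,t)$. This fails in general: for a bicharacter it forces $\omega(g,t)^2=1$. The conjugate phase is the right one for the inverse map $U_t^{*}$, not for $U_t$. With $\mu_t(h)=\omega(h,t)$, the intertwining, the norm comparison and the density argument all go through as you wrote them.
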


\begin{proof}
The inclusion $C_r^*(\Gamma_i,\omega_i)\hookrightarrow C_r^*(\Gamma,\omega)$ is standard; see e.g.\ \cite[\S4]{bedos2023c}. Since $\Gamma=\bigcup_i\Gamma_i$, we have
$\C[\Gamma]=\bigcup_i\C[\Gamma_i]$.
Taking closures in $B(\ell^2(\Gamma))$ yields
\[
C_r^*(\Gamma,\omega)=\overline{\bigcup_{i\in I} C_r^*(\Gamma_i,\omega_i)}.\qedhere
\]
\end{proof}

\begin{proof}[Proof of Theorem~\ref{thm: twisted} (and particularly Theorem~\ref{thm:main})]
Let $\Gamma$ be a linear group with trivial amenable radical.
By Lemma~\ref{lem:amenable-radical}, $\Gamma$ is a directed union of finitely generated subgroups
$\{\Gamma_i\}$ with trivial amenable radical.
By Lemma~\ref{lem:directed-union-Cr-twisted},
\[
C_{r}^{*}(\Gamma,\omega)=\overline{\bigcup_{i}C_{r}^{*}(\Gamma_{i},\omega\!\mid_{\Gamma_i})}
\]
for any $2$-cocycle $\omega$ on $\Gamma$.
Invoking \cite[Theorem~4.1]{robert2025selfless}, we reduce to proving that each
$C_{r}^{*}(\Gamma_{i},\omega\!\mid_{\Gamma_i})$ is selfless. Thus, we lose no generality by assuming that
$\Gamma$ is finitely generated.

We will show that $\Gamma$ satisfies $\mathrm P_{\mathrm{PHP}}$. This will in turn imply that every twisted reduced group
$C^*$-algebra of $\Gamma$ (and in particular the untwisted algebra $C_r^*(\Gamma)$) is selfless, by \cite[Theorem~14]{ozawa2025proximality} and,
more generally, \cite[Theorem~A]{flores2026pureness}.

Fix an embedding $\Gamma\le\GL_{d}(k)$ for some field  $k$. Since $\Gamma$ is finitely generated, we may assume $k$ is a finitely generated field. Let
$\bG:=\overline{\Gamma}^{\,z}$ be the Zariski closure of $\Gamma$.
As in the proof of Lemma~\ref{lem:amenable-radical}, after passing to a suitable quotient of $\bG$
we may assume that $\bG$ is semisimple and has no nontrivial finite normal algebraic subgroups.

Let $\bG^{\circ}$ be the identity component. Then $\bG^\circ$ is connected, semisimple, and adjoint, and as such it admits a decomposition \cite[Proposition 14.10]{borel2012linear}
\[
\bG^\circ=\prod_{j\in J}\bH_{j}
\]
of $k$-simple factors, where $J$ is a finite indexing set.
After replacing $k$ by a finite field extension (and replacing the $\bH_{j}$ accordingly), we may
assume that each $\bH_{j}$ is absolutely simple.

Let $\Gamma_c=\Gamma\cap \bG^\circ$.
For each $j\in J$, the projection of $\Gamma_c$ to $\bH_{j}(k)$ is Zariski dense, hence infinite.
There must therefore exist a regular function $f_{j}\in k[\bH_{j}]$ such that
$f_{j}(\Gamma_c)$ is infinite (e.g. a matrix coefficient of $\mathrm{GL}_d$).
By \cite[Lemma~2.1]{breuillard2007topological}, there exists a local field $\K_{j}$ and a field
embedding $\iota_{j}:k\hookrightarrow \K_{j}$ such that $\iota_{j}(f_{j}(\Gamma_c))$ is unbounded in
$\K_{j}$. In particular, the image of $\Gamma_c$ in $\bH_{j}(\K_{j})$ is unbounded.

We now regard the finite set $J$ as the set indexing the factors of an $S$-algebraic group. Thus, for convenience, from this point on we write $S:=J$, and relabel $\bH_{j}$, $\K_{j}$ and $\iota_j$ as $\bH_v$, $\K_v$ and $\iota_v$, respectively, for $v\in S$.

Consider the $S$-algebraic group
\[
G:=\prod_{v\in S}\bG(\K_{v}),\qquad G_c:=\prod_{v\in S}\bG^\circ(\K_{v}),
\]
and use the embeddings $\iota_{v}$ to view $\bG(k)$, and in particular $\Gamma$, as a subgroup of $G$.
Now $\Gamma_c$ is Zariski dense in $\bG^\circ(k)$, hence also in $\bG^\circ(\K_v)$ for each $v\in S$ \cite[Cor.\ 18.3]{borel2012linear}.
Thus, by Lemma~\ref{lem:S-Zariski-density}, $\Gamma_c$ is $S$-Zariski dense in $G_c$.

We consider the $G$-space $\mathcal B$ as defined in \eqref{eq:boundary}. We will show that
the $\Gamma$-action on $\mathcal B$ is faithful (note however that the $G$-action is not necessarily faithful), and then we will be done, since Theorem~\ref{thm:php} applies.  
The rest of the proof is therefore devoted to showing that $\bG(k)$ acts faithfully on $\mathcal B$.

Let $N$ be the kernel of the action $\bG(k)\actson \mathcal B$.
For each $v\in S$, let $\bN_v\leq \bG_{\K_v}$ denote the kernel of the
$\K_v$-algebraic action $\bG_{\K_v}\actson \mathcal B_v$.
Thus $\bN_v$ is a normal $\K_v$-algebraic subgroup of $\bG_{\K_v}$.
Since an element acting trivially on $\mathcal B$ acts trivially on each
factor $\mathcal B_v$, we have
\[
N\subseteq \bN_v(\K_v)\cap \bG(k)\qquad\text{for every }v\in S,
\]
where $\bG(k)$ is viewed inside $\bG(\K_v)$ through the embedding
$\iota_v:k\hookrightarrow \K_v$.

Assume for contradiction that $N$ is infinite, and let
$\bM:=\overline{N}^{\,z}$ be its $k$-Zariski closure in $\bG$.
Since $N$ is normalized by $\Gamma$, Zariski density implies that $\bM$ is a
normal algebraic $k$-subgroup of $\bG$. Moreover, $\bM$ is positive-dimensional,
and hence $\bM^\circ$ is a nontrivial connected normal algebraic subgroup of
$\bG^\circ$. Since $\bG^\circ=\prod_{v\in S}\bH_v$ is a direct product of
simple adjoint factors, there exists $v_0\in S$ such that
$\bH_{v_0}\leq \bM^\circ$.

Now pass to the local field $\K_{v_0}$. Since $N$ is
$k$-Zariski dense in  $\bM$, it is $\K_{v_0}$-Zariski dense in the $\K_{v_0}$-group
$\bM_{\K_{v_0}}$ obtained from $\bM$ via extension of scalars. As $N\subseteq \bN_{v_0}(\K_{v_0})$, we see that
$\bM_{\K_{v_0}}\leq \bN_{v_0}$, and in particular
$(\bH_{v_0})_{\K_{v_0}}\leq \bN_{v_0}$. Thus $\bH_{v_0}(\K_{v_0})$ acts
trivially on $\mathcal B_{v_0}$.

It follows from Lemma~\ref{lem:proximal-exists} that the projection of
$\Gamma_c$ to $\bH_{v_0}(\K_{v_0})$ is bounded, contradicting the choice of
$\K_{v_0}$. Therefore $N$ must be finite. Since $\bG$ has no nontrivial finite
normal algebraic subgroups, we conclude that $N=\{e\}$.
\end{proof}

\smallskip
\noindent\textit{Remark.} 
If $\bG$ were connected, the argument would be simpler: one could set
$G=\prod_j \bH_j(\K_j)$ and apply Theorem~\ref{thm:php} directly to the faithful action on the corresponding boundary. It is tempting to reduce to the connected case by replacing $\Gamma$ with $\Gamma\cap \bG^\circ$, but this would require a permanence result for selflessness under finite extensions. We expect that, if $\Lambda<\Gamma$ is a finite-index subgroup, then selflessness of $C_r^*(\Gamma)$ implies selflessness of $C_r^*(\Lambda)$, and conversely that selflessness of $C_r^*(\Lambda)$ implies selflessness of $C_r^*(\Gamma)$ provided $\Gamma$ is ICC. However, we were unable to prove this permanence statement. The analogous permanence statement for $C^*$-simplicity is known; see \cite[Proposition 19]{delaHarpe2007simplicity} and, more generally, \cite[\S4.7]{popa2000relative}.

\bibliographystyle{alpha}
\bibliography{ProjectSelfless}

\vspace{0.5cm}

\noindent{\textsc{Department of Mathematics, University of California San Diego, 9500 Gilman Drive, La Jolla, CA
92093, USA}}
\vspace{0.5cm}

\noindent{\textit{Email address:} \texttt{ivigdorovich@ucsd.edu}}

\noindent{\textit{Webpage:} \texttt{https://sites.google.com/view/itamarv}} \\
\end{document}